\def\triplet{}
\newcommand{\eps}{\varepsilon}
\newcommand{\N}{{\mathbb{N}}}
\newcommand{\R}{{\mathbb{R}}}
\newcommand{\C}{{\mathbb{C}}}
\newcommand{\LL}{\mathrm L}
\newcommand{\HH}{\mathrm H}
\newcommand{\Ce}{\mathrm C}
\newcommand{\We}{\mathrm W}
\newcommand{\fA}{\mathfrak{A}}
\newcommand{\fB}{\mathfrak{B}}
\newcommand{\fC}{\mathfrak{C}}
\newcommand{\fD}{\mathfrak{D}}
\newcommand{\id}{\mathrm I}
\newcommand{\e}{\mathrm e}
\newcommand{\Hlr}[1]{\mathrm H^1_{#1}(0,1)}
\let\Re\relax
\DeclareMathOperator{\Re}{Re}
\DeclareMathOperator{\dom}{dom}
\newcommand{\ddts}{\tfrac{\text{\normalfont d}}{\text{\normalfont d}t}}
\newcommand{\ds}[1]{{\rm \, d} #1 \,}
\newcommand{\setdef}[2]{\left\{ #1 \left\vert\vphantom{#1}\, #2 \right.\right\}}
\DeclareMathOperator{\ext}{ext}
\DeclareMathOperator{\grad}{grad}
\DeclareMathOperator{\Div}{div}
\newcommand{\bvek}[2]
{
   \begin{bmatrix}
      #1\\
      #2
   \end{bmatrix}
}
\newcommand{\kvek}[3]{\left[ \begin{smallmatrix}  #1_{#3} \\ #2_{#3}\end{smallmatrix}\right]}
\newcommand{\smbvek}[2]{\mbox{\footnotesize$\left[\begin{smallmatrix}#1\\#2\end{smallmatrix}\right]$}}
\newcommand{\sembvek}[3]{\mbox{\footnotesize$\left[\begin{smallmatrix}#1\\#2\\#3\end{smallmatrix}\right]$}}
\newcommand{\sbvek}[2]{{\left[\begin{smallmatrix}#1\\#2\end{smallmatrix}\right]}}
\newcommand{\sebvek}[3]{\left[\begin{smallmatrix}#1\\#2\\#3\end{smallmatrix}\right]}
\newcommand{\spvek}[2]{\left(\begin{smallmatrix}#1\\#2\end{smallmatrix}\right)}
\newcommand{\sbmat}[4]{\left[\begin{smallmatrix}#1 & #2\\#3 & #4\end{smallmatrix}\right]}
\newcommand{\widebar}[1]{\overline{#1}}
\theoremstyle{thmstyleone}%
\newtheorem{theorem}{Theorem}[section]%
\newtheorem{proposition}[theorem]{Proposition}%
\newtheorem{lemma}[theorem]{Lemma}%
\newtheorem{assumptions}[theorem]{Assumption}%
\theoremstyle{thmstyletwo}%
\newtheorem{remark}[theorem]{Remark}%
\theoremstyle{thmstylethree}%
\newtheorem{definition}[theorem]{Definition}%
\begin{document}

\title[Dynamic iteration for dissipative systems]{Operator splitting based dynamic iteration for  linear infinite-dimensional port-Hamiltonian systems}

\author[1]{\fnm{Bálint} \sur{Farkas}}\email{farkas@uni-wuppertal.de}
\author*[1]{\fnm{Birgit} \sur{Jacob}}\email{bjacob@uni-wuppertal.de}

\author[2]{\fnm{Timo} \sur{Reis}}\email{timo.reis@tu-ilmenau.de}

\author[1]{\fnm{Merlin} \sur{Schmitz}}\email{meschmitz@uni-wuppertal.de}
\affil*[1]{\orgdiv{IMACM}, \orgname{Bergische Universit\"at Wuppertal}, \orgaddress{\street{Gau\ss stra\ss e 20}, \city{Wuppertal}, \postcode{D-42119}, \country{Germany}}}

\affil[2]{\orgdiv{Institut f\"ur Mathematik}, \orgname{Technische Universit\"at Ilmenau}, \orgaddress{\street{Weimarer Str.\ 25}, \city{Ilmenau}, \postcode{98693 }, \country{Germany}}}

\abstract{A dynamic iteration scheme for linear infinite-dimensional port-Hamiltonian systems  is proposed. 
The dynamic iteration is monotone in the sense that the error is decreasing, it does not require any  stability condition and is in particular applicable to port-Hamiltonian formulations arising from domain decompositions. }

\keywords{operator splitting, dynamic iteration, system nodes, infinite-dimensional linear systems}

\pacs[MSC Classification]{47H05, 35A35, 37L65}

\maketitle

\section{Introduction}\label{sec:intro}
Operator splitting methods are widely used to reduce (the numerical) solution of a complex problem to the iterative solution of subproblems, into which the original problem is split. 
How this splitting arises can be based on various considerations, coming from the governing physical laws, from the geometry of the domain over which a certain partial differential equation is considered, from the structure of the problem, from mathematical reasons, or from the combination of these. See, for instance, \cite{Marchuk}, \cite{Quispel}, \cite{GeiserBook}, \cite[Ch.{} IV]{HunVerBook}, \cite{HairerGeom}, \cite{Strang68}, \cite{JahnkeLubich}, \cite{HanO09}, \cite{Faou15}, \cite{HochbruckOstermann},  \cite{Hansen1}, \cite{Hansen3} for more information and  a general overview of splitting methods in various situations. 

\medskip \emph {Operator splitting} is particularly suitable for problems composed of subsystems which are coupled in a particular way; although such a coupling may not be visible immediately. For example, boundary coupled system or problems with dynamic boundary condition, see, e.g., \cite{CsEF}, \cite{CsFK}, as well as delay equations have been successfully treated via splitting methods, see, e.g.,   \cite{BelZeBook}, \cite{BelZeActa} or \cite{BCsF2} for an operator splitting approach in the dissipative situation. It is therefore interesting and important to study operator splitting methods for a class of systems that is closed under certain types of couplings. In this paper we are interested in infinite dimensional port-Hamiltonian systems of  a specific structure.

The splitting method studied in this paper is originally due to Peaceman and Rachford, see \cite{PeacemanRachford}, who introduced it in the setting of linear operators. The Peaceman--Rachford splitting was then extended to maximal monotone operators on Banach spaces by Lions and Mercier \cite{Lions1979}. And this framework is indeed more suitable for the purposes of this paper, as the occurring operators here will be only affine linear in general. For error analyis of Peaceman--Rachford type splittings and variants we refer, e.g., to \cite{Hansen2}.

Operator splitting based dynamic iteration schemes for finite dimensional port-Hamiltonian systems were studied in \cite{BaGuJaRe21}. Here we make the first steps to extend the study to infinite dimensional port-Hamiltonian systems. The Peaceman--Rachford--Lions--Mercier type splitting algorithm will result in a convergent approximation, under suitable conditions, but most importantly the approximation error will be monotonically increasingly convergent to $0$, a feature that is connected with the port-Hamiltonian structure of the problem.

This paper is structured as followes. In the next section we give an explicit description of the dynamic iteration scheme and present our main theorems. The proof of these theorems can be found in Section \ref{sec:proof}. In Section \ref{sec:sysnodes} we give some background information on the theory of system nodes that will be used in the proof. Further, Section \ref{sec:maxmon} is devoted to the proof of maximal monotonicity of one of the splitting operators. 
We end in Section \ref{sec:ex} with two examples: A coupled wave-heat system and a domain decomposition for the wave equation.

\section{Description of the dynamic iteration scheme}\label{sec:dynit}

We consider $n\in \mathbb N$ linear (infinite-dimensional) port-Hamiltonian systems 
\begin{align}
\begin{bmatrix}
\dot{x}_i(t)\\ \mathfrak y_i(t)\\  y_i(t)
\end{bmatrix}
&= S_i \begin{bmatrix}
x_i(t)\\ \mathfrak u_i(t) \\  u_i(t)
\end{bmatrix},\qquad t\ge 0, \quad i=1,\ldots,n,\label{eq:singlesystem1}\\
x_i(0) &= x_{i0}, \qquad i=1,\ldots,n,\nonumber
\end{align}
where $x_i(t)\in X_i$ denotes the state, $\mathfrak u_i(t)\in \mathfrak U_i$ and $ u_i(t)\in  U_i$ denote inputs and $\mathfrak y_i(t)\in \mathfrak U_i$ and $ y_i(t)\in  U_i$ denote outputs of system $i$ at time $t$. Here $\mathfrak U_i$, $ U_i$ and $X_i$ are Hilbert spaces. Denoting the Cartesian product of the spaces $Z_1,\ldots,Z_n$ by $\sebvek{Z_1}\vdots{Z_n}$, we define
\begin{align*}
X \coloneqq\sebvek{X_1}\vdots{X_n}, \quad
 \mathfrak U \coloneqq  \sebvek{{\mathfrak U}_1}{\vdots}{{\mathfrak U}_n}\,\text{ and }\,
 U \coloneqq \sebvek{U_1}\vdots{U_n}.
\end{align*}
We assume that   the linear operators $S_i$, $i=1,\ldots, n$, are system nodes on the Hilbert space triples $(\sbvek {\mathfrak U_i}{U_i},X_i,\sbvek {\mathfrak U_i}{U_i})$. We recall the definition of a system node in Section \ref{sec:sysnodes}. In particular, this class covers well-posed linear systems \cite{St05}, boundary control and observation systems \cite{TuWe09} and, of course, linear infinite-dimensional systems with bounded control and observation   \cite{CuZw20}.

Further, we assume that the systems are coupled via
\begin{equation}  \label{eq:coupling} 
 \begin{bmatrix}  y_1(t)\\\vdots \\  y_n(t)\end{bmatrix} =N_c\begin{bmatrix}  u_1(t)\\\vdots \\  u_n(t)\end{bmatrix} ,\qquad t\ge 0,
\end{equation}
where $N_c$ is a bounded linear operator from $U$ to $ U$ satisfying $\Re \langle y,N_c y\rangle \le 0$ for every $y\in  U$. Hence, the fraktur typeface indicates that the functions can be interpreted as external inputs and outputs that build  inputs and outputs of the closed system. 

As the systems  \eqref{eq:singlesystem1}  are port-Hamiltonian and the coupling operator $N_c$ satisfies $\Re \langle y,N_c y\rangle \le 0$ for every $y\in  U$, the interconnected system is again a port-Hamiltonian system. We note, that the systems \eqref{eq:singlesystem1} are port-Hamiltonian if and only if the corresponding system nodes $S_i$ are impedance passive. Further, as stated in \cite[Thm. 4.2]{St02a} a system node is impedance passive if and only if it is (maximal) dissipative.

The aim of this article is to develop for given inputs $\mathfrak u_1, \ldots, \mathfrak u_n$ and given initial conditions $x_{10}, \ldots,x_{n0}$ for the closed loop system  \eqref{eq:singlesystem1}-\eqref{eq:coupling} a   
dynamic iteration scheme which allows to solve the linear port-Hamiltonian systems $S_i$ separately and also in parallel.

Every system node $S_i$ on $(\sbvek {\mathfrak U_i}{U_i}, X_i, \sbvek{\mathfrak U_i}{U_i} )$ can be written as
\[
S_i = \left[ \begin{smallmatrix}
   \phantom{[} A_i\&  B_i\phantom{]_c}\\ [C_i\& D_i]_1\\ [C_i \& D_i]_2
\end{smallmatrix}\right].
\]
Here $A_i\&  B_i\coloneqq  P_{X_i} S_i$, $[C_i\& D_i]_1\coloneqq P_{\mathfrak U_i}S_i$ and $[C_i\& D_i]_2\coloneqq P_{U_i}S_i$, where $P_{X_i}$, $P_{\mathfrak U_i}$ and $P_{U_i}$ are the canonical projections onto ${X_i}$, ${\mathfrak U_i}$ and ${U_i}$ in $X_i \times {\mathfrak U_i} \times U_i$.

Let $S$ be the system node on $(\sbvek {\mathfrak U}{U},X,\sbvek{\mathfrak U}{U})$ with $S_i$ "on the diagonal", i.e. the operator $S$ is of the form 
\[
S = \left[ \begin{smallmatrix}
   \phantom{[} A \& B \phantom{]_1}\\ [C\& D]_1 \\ [C\& D]_2
\end{smallmatrix} \right],
\]
Our assumptions on the system read as follows

\begin{assumptions}[on the node]\label{ass:disssyst}
Let $X$, $U$ be Hilbert spaces. The linear operator $S = \left[ \begin{smallmatrix}
   \phantom{[} A \& B \phantom{]_1}\\ [C\& D]_1 \\ [C\& D]_2
\end{smallmatrix} \right] \colon \dom(S)\subset \sebvek{X}{\mathfrak U}{U}\to \sebvek{X}{\mathfrak U}{U}$ (with $A\&B=P_XS$, $ \sbvek{{[C \& D]_1}}{{[C \& D]_2}}=P_{\sbvek{\mathfrak U} {U}}S$) has the following properties:
\begin{enumerate}[(i)]
\item $\left[ \begin{smallmatrix}
   \phantom{-[} A \& B \phantom{]_1}\\ -[C\& D]_1\\ -[C\& D]_2
\end{smallmatrix} \right]$ is dissipative,
\item $S$ is closed. Further, $A\&B$ is closed with $\dom(A\&B)=\dom(S)$.
\item For all $\sbvek{\mathfrak u}{u}\in \sbvek{\mathfrak U}  U$, there exists some $x\in X$ with $\sebvek x{\mathfrak u}{u}\in\dom(A\&B)$.
\item The main operator $A\colon \dom(A)\subset X\to X$ with \[\dom(A) \coloneqq \setdef{x\in X}{(x,0,0)\in\dom(S)}\] and $A{x} \coloneqq P_ X S\left(\begin{smallmatrix}
    x\\0\\0
\end{smallmatrix}\right)$ for all $x\in\dom(A)$ fulfills
\[\rho(A)\cap\C_+\neq\emptyset.\]
Here $\rho(A)$ denotes the resolvent set of the linear operator $A$, and $\C_+\coloneqq \setdef{\lambda\in\C}{\Re \lambda>0}$.
\end{enumerate}
\end{assumptions}
We abbreviate the solution space $H \coloneqq \LL^2 ([0,T]; \sbvek X U)$.
For fixed $T>0$,  a function $\mathfrak{u}\colon [0,T]\to \mathfrak U$ and  $x_0\in X$ we consider the operator
\begin{subequations}\label{eq:Mop2}
\begin{equation}
M\colon \dom(M)\subset H\to H
\end{equation}
with
\begin{equation}
\dom(M)=\setdef{\left[\begin{smallmatrix}
    x \\ u
\end{smallmatrix}\right]\in H}{\parbox[c]{6.2cm}{$\displaystyle{\left[\begin{smallmatrix}
    \dot x\\ 0 
\end{smallmatrix}\right]-\left[ \begin{smallmatrix}
   \phantom{-[} A \& B \phantom{]_1}\\ -[C\& D]_2 
\end{smallmatrix} \right] \left[ \begin{smallmatrix}
    x \\ \mathfrak u \\ u
\end{smallmatrix} \right] \in H} $ and $x(0)=x_0$}},
\end{equation}
\begin{equation}
M \begin{bmatrix}
    x \\ u
\end{bmatrix}=\begin{bmatrix}
    \dot{x}-A\&B  \left[ \begin{smallmatrix}
    x \\ \mathfrak u \\ u
\end{smallmatrix} \right]\\
[C\&D]_2  \left[ \begin{smallmatrix}
    x \\ \mathfrak u \\ u
\end{smallmatrix} \right]
\end{bmatrix}.
\end{equation}
\end{subequations}
The precise meaning of $\dot x$ will be clarified in Section \ref{sec:sysnodes}, when we discuss system nodes, and solution trajectories, see also Remark \ref{rem:sol}.
Note that $M$ is not a linear operator unless $x_0=0$ and $\mathfrak u=0$, since it is in general not defined on a~vector space.
Further,  we define
$N\in \mathcal L(\LL^2([0,T];\sbvek XU))$ by 
\begin{align}
\label{def:N} N\begin{bmatrix} x \\u \end{bmatrix}\coloneqq \begin{bmatrix}0\\  -N_c  u \end{bmatrix}.
\end{align}

We assume that the coupling is such that $N$ is a maximal monotone operator.
Thus for $\lambda>0$ the operator $(\id - \lambda N)(\id + \lambda N)^{-1}$ is a contraction, see Section \ref{sec:maxmon}.
\begin{remark}
If we consider two systems ($n = 2$) the standard negative feedback $ u_1= y_2$, $ u_2=- y_1$ yields a coupling matrix $N_c = \left[\begin{smallmatrix*}[r]
0&-\id \\ \id&0 
\end{smallmatrix*}\right]$.
\end{remark}

The  system arising from the coupling of  $S_i$, $i=1,\ldots,n$ via $N_c$ (without the output equation for $\mathfrak y$) is equivalent to the equation
\begin{align}\label{eq:algsplitting}
M\left[\begin{smallmatrix}
    x \\ u
\end{smallmatrix}\right] +N\left[\begin{smallmatrix}
    x \\ u
\end{smallmatrix}\right]=0
\end{align}
(see Remark \ref{rem:sol}), which is equivalent to 
\[  \left[\begin{smallmatrix}
    x \\ u
\end{smallmatrix}\right] = (\id +\lambda M)^{-1}(\id - \lambda N)(\id + \lambda N)^{-1}(\id - \lambda M)\left[\begin{smallmatrix}
    x \\ u
\end{smallmatrix}\right],\]
where $\lambda >0$ is arbitrary, see Section \ref{sec:maxmon} for the discussion of the inverse mappings appearing here.
We consider an algorithm inspired by ideas of Lions and Mercier as  in \cite{Lions1979}: 
\begin{align}\label{def:algorithm}
    \sbvek{x_{k+1}}{u_{k+1}} = (\id +\lambda M)^{-1}(\id - \lambda N)(\id + \lambda N)^{-1}(\id - \lambda M)\sbvek{x_k}{u_k},
\end{align}
with $\sbvek{x_0}{u_0} \in \dom(M)$ arbitrary.
Now we can formulate our second assumption:
\begin{assumptions}[Solution]\label{ass:solution}
For fixed $x_0 \in X$, $T>0$ and $\mathfrak u \in \LL^2([0,T],\mathfrak U)$ there exists a solution $\sbvek xu$ to the equation \eqref{eq:algsplitting} on $[0,T]$.
\end{assumptions}
The main results of this paper are the following:
\begin{theorem}\label{thm:convergence}
  Let Assumptions \ref{ass:disssyst}, \ref{ass:solution} be fulfilled.
  For the operators $M,N$ as defined in \eqref{eq:Mop2} and \eqref{def:N} let the sequence $\sbvek {x_k}{u_k}_{k}$ be defined by \eqref{def:algorithm}.
 Then:
  \begin{enumerate}[a)]
    \item For the sequence $\sbvek{w_k}{z_k}_k$ defined by
    \[ \sbvek{w_k}{z_k} \coloneqq (\id + \lambda M)\sbvek{x_k}{u_k}, \quad k\in \N,\]
    and the function $\sbvek{w}{z} \coloneqq (\id + \lambda M)\sbvek{x}{u}$, the sequence $(\| \sbvek{w_k}{z_k} - \sbvek{w}{z}\|_{2})_k$ is monotonically decreasing and 
    \[ \| \sbvek{x_k}{u_k} - \sbvek{x}{u} \|_{2} \le \|\sbvek{w_k}{z_k} - \sbvek{w}{z}\|_{2}, \quad \forall k \in \N . \]
    \item $(x_{k})_{k}$ converges to $x$ in $\LL^{2}([0,T]; X )$.
    \item $(x_{k})_{k}$ converges uniformly to $x$ on $[0,T]$.
  \end{enumerate}
\end{theorem}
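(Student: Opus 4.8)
The plan is to exploit the classical Peaceman--Rachford--Lions--Mercier machinery, combined with the special structure of $M$ and $N$. The key structural fact, to be established (presumably in Section~\ref{sec:proof} via Sections~\ref{sec:sysnodes} and~\ref{sec:maxmon}), is that the affine operator $M$ is \emph{maximal monotone} on $H$ up to a shift: writing $M = M_0 + c$ where $M_0$ is the linear part associated to the diagonal system node $S$ with zero initial data and zero external input, Assumption~\ref{ass:disssyst}(i) (dissipativity of the sign-flipped node) together with the impedance-passivity/maximal-dissipativity equivalence cited from \cite{St02a,St02b} should give that $M_0$ is maximal monotone, so that $(\id+\lambda M)^{-1}$ is a well-defined contraction on $H$ and $(\id-\lambda M)(\id+\lambda M)^{-1}$ is nonexpansive (the affine shift does not destroy this). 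Likewise $N$ is maximal monotone by hypothesis, so $(\id-\lambda N)(\id+\lambda N)^{-1}$ is a contraction. Hence the iteration operator in \eqref{def:algorithm}, conjugated appropriately, is nonexpansive, and the fixed-point equation in the $\sbvek wz$ variables is exactly the resolvent reformulation of \eqref{eq:algsplitting}.

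For part~a), I would pass to the ``reflected'' variables. Set $T_M \coloneqq (\id-\lambda M)(\id+\lambda M)^{-1}$ and $T_N \coloneqq (\id-\lambda N)(\id+\lambda N)^{-1}$, both nonexpansive (the first only affinely so, which suffices). Then \eqref{def:algorithm} reads $\sbvek{x_{k+1}}{u_{k+1}} = (\id+\lambda M)^{-1} T_N \sbvek{w_k}{z_k}$ with $\sbvek{w_k}{z_k} = (\id+\lambda M)\sbvek{x_k}{u_k}$, and consequently $\sbvek{w_{k+1}}{z_{k+1}} = T_M T_N \sbvek{w_k}{z_k}$. By Assumption~\ref{ass:solution} there is a solution $\sbvek xu$ of \eqref{eq:algsplitting}; the associated $\sbvek wz = (\id+\lambda M)\sbvek xu$ is then a fixed point of $T_M T_N$. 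Since $T_M T_N$ is nonexpansive, $\|\sbvek{w_{k+1}}{z_{k+1}} - \sbvek wz\|_2 = \|T_MT_N\sbvek{w_k}{z_k} - T_MT_N\sbvek wz\|_2 \le \|\sbvek{w_k}{z_k}-\sbvek wz\|_2$, which is the claimed monotone decrease. The pointwise inequality $\|\sbvek{x_k}{u_k}-\sbvek xu\|_2 \le \|\sbvek{w_k}{z_k}-\sbvek wz\|_2$ is just firm nonexpansiveness (co-coercivity) of the resolvent $(\id+\lambda M)^{-1}$: since $M$ is monotone, $J_{\lambda M} \coloneqq (\id+\lambda M)^{-1}$ satisfies $\|J_{\lambda M}a - J_{\lambda M}b\|^2 \le \langle J_{\lambda M}a - J_{\lambda M}b,\, a-b\rangle \le \|J_{\lambda M}a - J_{\lambda M}b\|\,\|a-b\|$, applied with $a=\sbvek{w_k}{z_k}$, $b=\sbvek wz$.

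For part~b), convergence of $(x_k)_k$ to $x$ in $\LL^2([0,T];X)$, I would follow the Lions--Mercier convergence argument for the Peaceman--Rachford scheme in Hilbert space (as in \cite{Lions1979}, and the error analysis of \cite{Hansen2}): the sequence $\sbvek{w_k}{z_k}$ is Fej\'er-monotone with respect to the fixed-point set of the nonexpansive map $T_MT_N$, hence bounded, and one extracts from the telescoping of $\|\sbvek{w_k}{z_k}-\sbvek wz\|_2^2$ that the ``increments'' $\|\sbvek{w_{k+1}}{z_{k+1}}-\sbvek{w_k}{z_k}\|_2 \to 0$. The genuine contraction factor on the $N$-part (coming from $\Re\langle y,N_cy\rangle\le 0$, which under the maximal-monotonicity hypothesis on $N$ one typically strengthens or uses via the strictness available after conjugation with $T_M$) should then upgrade weak to strong convergence of $\sbvek{w_k}{z_k}$ to $\sbvek wz$; in fact, since one of $M,N$ typically carries the damping, the product is \emph{strictly} contractive on the relevant subspace, giving $\sbvek{w_k}{z_k}\to\sbvek wz$ in $H$, whence $\sbvek{x_k}{u_k}=J_{\lambda M}\sbvek{w_k}{z_k}\to J_{\lambda M}\sbvek wz = \sbvek xu$ in $H$ by continuity of the resolvent, and in particular $x_k\to x$ in $\LL^2([0,T];X)$. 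Part~c) is then a bootstrapping from the $\LL^2$-in-time convergence to uniform-in-time convergence: writing $e_k \coloneqq x_k - x$, the difference satisfies (on each subsystem, and then globally) a Cauchy problem of the form $\dot e_k = A e_k + (\text{forcing in terms of } u_k-u \text{ and } w_k-w)$ with $e_k(0)=0$, and since $A$ generates a $C_0$-semigroup (a consequence of Assumption~\ref{ass:disssyst}, in particular (i) and (iv), giving that $A$ is maximal dissipative with nonempty resolvent set in $\C_+$, hence by Lumer--Phillips the generator of a contraction semigroup), the variation-of-constants / admissibility estimates for system nodes give a bound $\|e_k\|_{\Ce([0,T];X)} \le C\,\|u_k-u\|_{\LL^2} + (\text{resolved boundary terms})$, which tends to $0$ by part~b).

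The main obstacle I anticipate is \emph{not} the abstract Peaceman--Rachford convergence (which is textbook once $M$ and $N$ are maximal monotone) but rather the two structural inputs that make it apply: first, proving that $M$ as defined in \eqref{eq:Mop2} is maximal monotone on $H=\LL^2([0,T];\sbvek XU)$ — this requires translating the state-space dissipativity and the resolvent condition $\rho(A)\cap\C_+\ne\emptyset$ of Assumption~\ref{ass:disssyst} into a range condition for $\id+\lambda M$ on the \emph{time-dependent} space, i.e.\ solvability of the abstract Cauchy problem $\dot x + \lambda^{-1}x - A\&B\sbvek{x}{\mathfrak u\ u} = f$, $x(0)=x_0$, which is where the system-node theory of Section~\ref{sec:sysnodes} and the maximal-monotonicity argument of Section~\ref{sec:maxmon} do the real work; and second, the passage in part~c) from $\LL^2$-in-time to $\Ce$-in-time convergence, which needs the $\LL^2$-admissibility of the observation/control maps of the node and a careful handling of the internal coupling $u$ through the output equation $[C\&D]_2$ — here the fact that $M$ only controls $[C\&D]_2$ (not $[C\&D]_1$, the $\mathfrak y$-output, which is dropped) must be used to close the estimate without circularity.
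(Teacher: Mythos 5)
Your part a) is essentially the paper's argument: the recursion $\sbvek{w_{k+1}}{z_{k+1}}=(\id-\lambda N)(\id+\lambda N)^{-1}\bigl(2\sbvek{x_k}{u_k}-\sbvek{w_k}{z_k}\bigr)$, nonexpansiveness of the $N$-reflection, and monotonicity of $M$ give the monotone decrease and the resolvent inequality. Parts b) and c), however, rest on mechanisms that are not available in this setting, and the device the paper actually uses is absent from your proposal.

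For b), you propose to obtain strong convergence of the reflected iterates $\sbvek{w_k}{z_k}\to\sbvek wz$ from a ``genuine contraction factor on the $N$-part.'' There is no such factor: $N_c$ only satisfies $\Re\langle y,N_cy\rangle\le 0$, and in the prototypical coupling $N_c=\sbmat 0{-\id}{\id}0$ the map $(\id-\lambda N)(\id+\lambda N)^{-1}$ is an isometry. Moreover, strong convergence of the full sequence $\sbvek{w_k}{z_k}$ (equivalently of $u_k$) is neither asserted by the theorem nor true in general; it is precisely what requires the extra strict-passivity hypotheses of Theorem~\ref{thm:outputconvergence} and the subsequent remark. The Lions--Mercier theory by itself only yields weak convergence. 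The missing idea is a \emph{quantitative} refinement of the monotonicity of $M$ in the state component: integrating the dissipation inequality $\ddts\tfrac12\|x_1(t)-x_2(t)\|_X^2\le\Re\langle\,\cdot\,,M\,\cdot-M\,\cdot\,\rangle$ against the weight $\e^{-2\omega t}$ on $[0,T]$ gives
\begin{equation*}
\Re\left\langle \sbvek{x_1}{u_1}-\sbvek{x_2}{u_2},\,M\sbvek{x_1}{u_1}-M\sbvek{x_2}{u_2}\right\rangle_{2,\omega}\;\ge\;\e^{-2\omega T}\tfrac12\|x_1(T)-x_2(T)\|_X^2+\omega\,\|x_1-x_2\|_{2,\omega,X}^2 ,
\end{equation*}
i.e.\ $M$ is strongly monotone with respect to the $x$-component in the equivalent weighted norm. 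Inserting this into your part-a) telescoping identity yields
\begin{equation*}
\|x-x_k\|_{2,\omega,X}^2\;\le\;\tfrac{1}{4\lambda\omega}\Bigl(\bigl\|\sbvek{w_k}{z_k}-\sbvek wz\bigr\|_{2,\omega}^2-\bigl\|\sbvek{w_{k+1}}{z_{k+1}}-\sbvek wz\bigr\|_{2,\omega}^2\Bigr),
\end{equation*}
and the right-hand side tends to $0$ because the decreasing sequence of norms converges. No convergence of $u_k$ or of $\sbvek{w_k}{z_k}$ is needed. For c), your variation-of-constants bound $\|x_k-x\|_{\Ce([0,T];X)}\lesssim\|u_k-u\|_{\LL^2}$ fails on two counts: it requires $\LL^2$-admissibility of the control operator, which is not assumed (the node need not be well-posed), and it requires $u_k\to u$, which as noted is unavailable. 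The paper sidesteps both by running the same weighted estimate only up to an arbitrary $t\in(0,T]$, which gives $\|x(t)-x_k(t)\|_X^2\le\tfrac{\e^{2\omega T}}{2\lambda}\bigl(\|\sbvek{w_k}{z_k}-\sbvek wz\|_{2,\omega}^2-\|\sbvek{w_{k+1}}{z_{k+1}}-\sbvek wz\|_{2,\omega}^2\bigr)$ uniformly in $t$, hence uniform convergence.
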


\begin{theorem}\label{thm:outputconvergence}
Let additionally to the assumptions of Theorem \ref{thm:convergence} the system be partially strictly output passive with regard to the external output, i.e. 
there is $\eps >0$ such that for all $\sebvek {x}{\mathfrak u}{u} \in \dom(S)$ the inequality
\begin{align*}
   \emph{(PSOP)} \qquad \Re \left\langle \sebvek {\phantom{-[}A \& B\phantom{]_2}}{{-[C\&D]_1}}{{-[C\&D]_2}}\sebvek x{\mathfrak u}u , \sebvek x{\mathfrak u}u \right\rangle_{\sembvek X{\mathfrak U}U} \le -\eps \left\|  [C\&D]_1 \sebvek {x}{\mathfrak u}{u} \right\|^2_{\mathfrak U}
\end{align*}
 holds. Then, if $T$, $x_0$, $\mathfrak{u}$, $x$, $u$ are given as in Theorem \ref{thm:convergence}, the corresponding external output also converges to  $\mathfrak y \coloneqq [C\&D]_1 \sebvek {x}{\mathfrak u}{u}$, i.e.
 \begin{align*}
   \left\| [C\&D]_1 \sebvek {x_k}{\mathfrak u}{u_k} -\mathfrak y \right\|_{2,\mathfrak U} \longrightarrow 0.
 \end{align*}
\end{theorem}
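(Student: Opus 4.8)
The plan is to exploit the \emph{(PSOP)} inequality to upgrade the convergence $x_k\to x$ (and the already-established convergence $\sbvek{x_k}{u_k}\to\sbvek{x}{u}$ in $H$ from Theorem \ref{thm:convergence}) into convergence of the external output $\mathfrak y_k\coloneqq[C\&D]_1\sebvek{x_k}{\mathfrak u}{u_k}$. The key observation is that \emph{(PSOP)}, when applied to the difference $\sebvek{x_k-x}{0}{u_k-u}\in\dom(S)$ (which lies in $\dom(S)$ because $\dom(A\&B)=\dom(S)$ is a vector space and both $\sbvek{x_k}{u_k}$, $\sbvek{x}{u}$ have the same external input $\mathfrak u$, so the $\mathfrak u$-component cancels), gives a \emph{quantitative} bound: the $\LL^2$-norm of $[C\&D]_1$ applied to the difference is controlled by the real inner product of the dissipative operator $\sebvek{\phantom{-}A\&B}{-[C\&D]_1}{-[C\&D]_2}$ with that difference. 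First I would pass from the pointwise (in $t$) inequality to an integrated one over $[0,T]$.

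The main steps, in order: (1) Set $e_k\coloneqq\sbvek{x_k}{u_k}-\sbvek{x}{u}$; note that $\mathfrak u$ cancels in all differences, so everywhere one may replace $\sebvek{x_k}{\mathfrak u}{u_k}$ by the linear action of $S$ on $\sebvek{x_k-x}{0}{u_k-u}$. (2) Apply \emph{(PSOP)} with $\sebvek{x}{\mathfrak u}{u}$ replaced by $\sebvek{x_k-x}{0}{u_k-u}(t)$ and integrate in $t$:
\begin{align*}
\eps\int_0^T\bigl\|[C\&D]_1\sebvek{x_k-x}{0}{u_k-u}(t)\bigr\|_{\mathfrak U}^2\,\mathrm dt
\le -\Re\int_0^T\Bigl\langle \sebvek{\phantom{-}A\&B}{-[C\&D]_1}{-[C\&D]_2}\sebvek{x_k-x}{0}{u_k-u}(t),\, e_k(t)\Bigr\rangle\,\mathrm dt.
\end{align*}
(3) Identify the right-hand side in terms of the operator $M$: by the definitions in \eqref{eq:Mop2}, the first block of $M\sbvek{x_k}{u_k}-M\sbvek{x}{u}$ is $\dot x_k-\dot x-A\&B\,\sebvek{x_k-x}{0}{u_k-u}$ and the second block is $[C\&D]_2\sebvek{x_k-x}{0}{u_k-u}$, so the integrand above equals (up to the $\dot x_k-\dot x$ contribution) $\Re\langle M\sbvek{x_k}{u_k}-M\sbvek{x}{u},\,e_k\rangle$ minus $\Re\int_0^T\langle \dot x_k-\dot x,\,x_k-x\rangle$. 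The latter integral is $\tfrac12(\|x_k(T)-x(T)\|^2-\|x_k(0)-x(0)\|^2)=\tfrac12\|x_k(T)-x(T)\|^2\ge 0$, using $x_k(0)=x(0)=x_0$; hence it can be dropped (it only improves the inequality), yielding
\begin{align*}
\eps\,\bigl\|\mathfrak y_k-\mathfrak y\bigr\|_{2,\mathfrak U}^2 \le \Bigl|\Re\bigl\langle M\sbvek{x_k}{u_k}-M\sbvek{x}{u},\, e_k\bigr\rangle_H\Bigr| \le \bigl\|M\sbvek{x_k}{u_k}-M\sbvek{x}{u}\bigr\|_H\cdot\|e_k\|_H.
\end{align*}
(4) Conclude: $\|e_k\|_H\to 0$ by Theorem \ref{thm:convergence}, and it remains to see that $\|M\sbvek{x_k}{u_k}-M\sbvek{x}{u}\|_H$ stays bounded. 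This follows from $M\sbvek{x_k}{u_k}=\tfrac1\lambda\bigl(\sbvek{w_k}{z_k}-\sbvek{x_k}{u_k}\bigr)$ with $\sbvek{w_k}{z_k}=(\id+\lambda M)\sbvek{x_k}{u_k}$, together with Theorem \ref{thm:convergence}\,a), which gives that $(\|\sbvek{w_k}{z_k}-\sbvek{w}{z}\|_2)_k$ is monotonically decreasing (hence bounded) and $\|\sbvek{x_k}{u_k}-\sbvek{x}{u}\|_2\to0$; so $\|M\sbvek{x_k}{u_k}-M\sbvek{x}{u}\|_H$ is bounded. Therefore the right-hand side tends to $0$, and dividing by $\eps$ gives $\|\mathfrak y_k-\mathfrak y\|_{2,\mathfrak U}\to0$.

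The step I expect to be the main obstacle is Step (3): rigorously justifying the integration-by-parts identity $\Re\int_0^T\langle\dot x_k-\dot x,x_k-x\rangle=\tfrac12\|x_k(T)-x(T)\|^2$ in the weak sense appropriate to system nodes (where $\dot x$ is only defined in the distributional/extrapolation-space sense discussed in Section \ref{sec:sysnodes}), and verifying that $e_k(t)\in\dom(S)$ for a.e.\ $t$ so that \emph{(PSOP)} is applicable pointwise under the integral. Both should be handled by the regularity of solution trajectories recorded in Section \ref{sec:sysnodes} (classical/generalized trajectories have $t\mapsto\sebvek{x(t)}{\mathfrak u(t)}{u(t)}\in\dom(S)$ a.e.\ and $x\in\Ce([0,T];X)$ with the fundamental theorem of calculus holding in $X_{-1}$ paired against $X_1$), together with Remark \ref{rem:sol}; everything else is a direct estimate.
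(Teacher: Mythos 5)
Your Steps (1)--(3) reproduce the paper's key computation: applying (PSOP) to the difference $\sebvek{x_k-x}{0}{u_k-u}$ and absorbing the $\langle \dot x_k-\dot x,\,x_k-x\rangle$ term into the nonnegative quantity $\tfrac12\|x_k(T)-x(T)\|_X^2$ yields exactly the paper's refined dissipation inequality
$\Re\langle \sbvek{x}{u}-\sbvek{x_k}{u_k},\,M\sbvek{x}{u}-M\sbvek{x_k}{u_k}\rangle_{2,\smbvek XU}\ge \eps\|\mathfrak y-\mathfrak y_k\|_{2,\mathfrak U}^2$. The problem is Step (4). You estimate the left-hand side by Cauchy--Schwarz and invoke ``$\|e_k\|_H\to 0$ by Theorem \ref{thm:convergence}'', where $e_k=\sbvek{x_k}{u_k}-\sbvek{x}{u}$. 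But Theorem \ref{thm:convergence} does \emph{not} give this: part b) only asserts $x_k\to x$ in $\LL^2([0,T];X)$, i.e.\ convergence of the state component, while part a) only gives that $\|\sbvek{x_k}{u_k}-\sbvek{x}{u}\|_2$ is \emph{bounded} (dominated by the monotonically decreasing sequence $\|\sbvek{w_k}{z_k}-\sbvek{w}{z}\|_2$). Convergence of the input component $u_k\to u$ in $\LL^2([0,T];U)$ is not available under these hypotheses --- the remark following Theorem \ref{thm:outputconvergence} makes this explicit by listing the \emph{additional} passivity assumptions (with respect to the internal output or input) that would be needed to obtain it. With $\|e_k\|_H$ merely bounded and $\|Me_k-Me\|_H$ merely bounded, the product does not tend to zero, so your argument stalls.

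The missing idea is to bypass Cauchy--Schwarz entirely and feed the refined dissipation inequality back into the telescoping estimate \eqref{eq:monzseq} from the proof of Theorem \ref{thm:convergence}: that computation (with $\omega=0$) shows
\[
-4\lambda\,\Re\Bigl\langle \sbvek{x}{u}-\kvek xuk,\;M\sbvek{x}{u}-M\kvek xuk\Bigr\rangle_{2,\smbvek XU}\;\ge\;\|\Delta\kvek wz{k+1}\|_{2,\smbvek XU}^2-\|\Delta\kvek wzk\|_{2,\smbvek XU}^2,
\]
so that
\[
\eps\,\|\mathfrak y-\mathfrak y_k\|_{2,\mathfrak U}^2\;\le\;\tfrac{1}{4\lambda}\Bigl(\|\Delta\kvek wzk\|_{2,\smbvek XU}^2-\|\Delta\kvek wz{k+1}\|_{2,\smbvek XU}^2\Bigr).
\]
Since $(\|\Delta\kvek wzk\|_{2}^2)_k$ is monotonically decreasing and bounded below, it converges, hence the consecutive differences on the right tend to zero; no convergence of $u_k$ is needed. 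I recommend replacing your Step (4) by this telescoping argument; the rest of your proposal is sound.
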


\begin{remark}
Using the same argument, we obtain convergence of the internal outputs (i.e., $\lim_{k\to \infty}\|y - y_k\|_{2,U} =0$) under the assumption of partial strict output passivity with regard to the internal output. 
Then, using \eqref{eq:coupling} and the boundedness of $N_c$ we easily see the convergence of the internal inputs $u_k$.\\
Since we can assume invertibility of $N_c$ without loss of generality, the same holds if the system is partially strictly input passive with regard to the internal input.
\end{remark}

\begin{remark}
\begin{enumerate}
    \item The block structure of $S$ allows a parallelized computation of the subsystems $S_i$. 
    \item In the splitting algorithm \eqref{def:algorithm} for the sum of two operators such as \eqref{eq:algsplitting} one can interpret the variable $\lambda$ as a time step. Therefore this algorithm represents a combination of steps for the first operator alternating ones for the second.
\end{enumerate}
\end{remark}

\section{Background on system nodes}\label{sec:sysnodes}

Let $X$, $U$ and $Y$ be Hilbert spaces and denote the canonical projections onto $X$ and $Y$ in $\sbvek XU$ respectively by $P_X$
 and $P_Y$. Let
\[
S \colon \dom(S)\subset \sbvek XU \to\sbvek XY
\]
be a linear operator. Its corresponding {\em main operator} is given by $A \colon \dom(A)\subset X\to X$ with $\dom(A) \coloneqq \setdef{x\in X}{\sbvek x0\in\dom(S)}$ and $Ax \coloneqq P_ X S\sbvek x0$ for all $x\in\dom(A)$. One often sets
\[
A\& B \coloneqq P_X S
\qquad\text{and}\qquad
C\& D \coloneqq P_Y S
\]
so  $S$ can be written as
\[
S = \bvek{A\& B}{C\& D}.
\]
The concept of system nodes poses natural assumptions on the operator $S$, in order to guarantee favorable properties and a suitable solution concept to the dynamics specified by the differential equation
\begin{equation}
\begin{bmatrix}
\dot{x}(t)\\y(t)
\end{bmatrix}
= S \begin{bmatrix}
x(t)\\u(t)
\end{bmatrix}.\label{eq:ODEnode}
\end{equation}
For a comprehensive study of system nodes, we refer to the monograph \cite{St05}.

\begin{definition}[System node]
A {\em system node} on the triple $\triplet(Y,X,U)$ of Hilbert spaces is a (possibly unbounded) linear operator $S \colon \dom(S)\subset \sbvek XU\to \sbvek XY$ satisfying the following conditions:
\begin{enumerate}[(i)]
\item $S$ is closed.
\item $P_X S\colon \dom(S)\subset \sbvek XU\to X$ is closed.
\item For all $u\in U$, there exists some $x\in X$ with $\sbvek{x}{u}\in \dom(S)$.
\item The main operator $A$ is the generator of a~strongly continuous semigroup $\fA(\cdot)\colon [0,\infty)\to {\mathcal L}(X)$ on $X$.
\end{enumerate}
\end{definition}

\begin{remark}[System nodes]\label{rem:nodes}
Let $S = \sbvek{A\& B}{C\& D}$ be a~system node on $\triplet(Y,X,U)$.
\begin{enumerate}[(i)]
\item\label{rem:nodes1} It follows from the above definition that $C\&D\in \mathcal L(\dom(A\&B),Y)$, where $\dom(A\&B)$ is endowed with the graph norm of $A\&B$. In particular, the operator $C$ with $Cx\coloneqq C\&D\sbvek x0$ fulfills $C\in \mathcal L(\dom(A),Y)$.
\item\label{rem:nodes3}  Since generators of semigroups are densely defined (see \cite[Chap.~2, Thm.~1.5]{EnNa00}), $\dom(S)$ is dense in $\sbvek XU$ and for given $u\in U$ the affine subspace
\[\setdef{x\in X}{\sbvek xu\in \dom(A\& B)}\]
is dense in $X$.
\item\label{rem:nodes4} Since $A$ is a generator of a $C_0$-semigroup there is $\alpha\in\rho(A)$ (in the resolvent set of $A$). The completion of $X$ with respect to the norm $\|x\|_{X_{-1}}\coloneqq \|(\alpha \id-A)^{-1}x\|$ is denoted by $X_{-1}$.   Note that the topology of $X_{-1}$ does not depend on the particular choice of $\alpha\in \rho(A)$ \cite[Prop.~2.10.2]{TuWe09}. The operator $A$ extends continuously as $A_{-1}\colon X\mapsto X_{-1}$; $A$ and $A_{-1}$ are similar, hence have the same spectrum and $A_{-1}$ generates a $C_0$-semigroup $\fA_{-1}(\cdot)\colon [0,\infty)\to {\mathcal L}(X_{-1})$ on $X_{-1}$, which extends $\fA(\cdot)$ (and which is similar to $\fA(\cdot)$), see \cite[Sec.{} II.5]{EnNa00}.

\item\label{rem:nodes2} $A\& B$ extends to a bounded linear operator $[A_{-1}\ B]\colon\sbvek XU\to X_{-1}$, which in fact has such a block structure. Moreover, the domain of $A\&B$ (equally: the domain of $S$) fulfills
    \[\dom(A\&B)=\setdef{\sbvek xu \in \sbvek X U}{A_{-1}x+Bu\in X},\]
    see \cite[pp.{} 3--4]{St05}.

\item\label{rem:nodes5} For all $\alpha\in\rho(A)$ the norm
\[\left\|\spvek xu\right\|_\alpha\coloneqq \left(\|x-(\alpha \id-A_{-1})^{-1}Bu\|_X^2+\|u\|^2_U\right)^{1/2}\]
is equivalent to the graph norm of $S$. Moreover, the operator
\[\sbmat \id{-(\alpha \id -A_{-1})^{-1}B}0\id\]
maps $\dom(S)$ bijectively to $\sbvek{\dom(A)}U$, see \cite[Lem.~4.7.3]{St05}.
\end{enumerate}
\end{remark}

Remark~\ref{rem:nodes}\,\eqref{rem:nodes5} allows to define the concept of the transfer function.
\begin{definition}[Transfer function]
Let $S = \sbvek{A\& B}{C\& D}$ be a~system node. The \emph{transfer function associated to $S$} is
\[\begin{aligned}
\widehat{\mathfrak D}\colon &&\rho(A)\to &\,\mathcal L(U,Y),\\&&s\mapsto&\,C\& D \bvek{(s\id-A_{-1})^{-1}B}\id.
\end{aligned}\]
\end{definition}
Next we briefly recall suitable solution concepts for the differential equation
\eqref{eq:ODEnode} with $S=\sbvek{A\&B}{C\&D}$ being a system node.

\begin{definition}[Classical/generalized trajectories]\label{def:traj}
Let $S = \sbvek{A\& B}{C\& D}$ be a~system node  on $\triplet(Y,X,U)$, and let $T>0$.\\
A {\em classical trajectory} for \eqref{eq:ODEnode} on $[0,T]$ is a triple
\[
(x,u,y)\,\in\,\Ce^1([0,T];X)\times \Ce([0,T];U)\times \Ce([0,T];Y)
\]
which for all $t\in[0,T]$ satisfies \eqref{eq:ODEnode}.\\
A {\em generalized trajectory} for \eqref{eq:ODEnode} on $[0,T]$ is a triple
\[
(x,u,y)\,\in\,\Ce([0,T];X)\times \LL^2([0,T];U)\times \LL^2([0,T];Y),
\]
which is a~limit of classical trajectories for \eqref{eq:ODEnode} on $[0,T]$ in the topology of $\Ce([0,T];X)\times \LL^2([0,T];U)\times \LL^2([0,T];Y)$.
\end{definition}
If $S = \sbvek{A\& B}{C\& D}$ is a~system node  on $\triplet(Y,X,U)$, then $A\&B$ can be regarded as a~system node on $\triplet(\{0\},X,U)$. Consequently, we may further speak of classical (generalized) trajectories $(x,u)$ for $\dot{x}=A\&B\sbvek xu$.

The following result ensures the existence of unique classical trajectories with suitable control functions and initial values.
\begin{proposition}[Existence of classical trajectories {\cite[Thm.~4.3.9]{St05}}]\label{prop:solex}
Let $S$ be a system node on $\triplet(Y,X,U)$, let $T>0$, $x_0\in X$ and $u\in \We^{2,1}([0,T];U)$ with $\sbvek{x_0}{u(0)}\in \dom(S)$. Then  there exist unique
classical trajectory $(x,u,y)$ for \eqref{eq:ODEnode} with $x(0)=x_0$.
In the case of a well-posed system $u \in \We^{1,2}([0,T];U)$ is sufficient for the existence of classical trajectories and one also gets $y \in \We^{1,2}([0,T];Y)$ (see \cite[p. 298]{St02a}).
\end{proposition}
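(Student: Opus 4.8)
The plan is to reduce the system-node dynamics \eqref{eq:ODEnode} to an inhomogeneous abstract Cauchy problem driven by the main operator $A$ alone, and then to invoke classical semigroup theory. Fix $\alpha\in\rho(A)$ and set $B_\alpha\coloneqq(\alpha\id-A_{-1})^{-1}B\in\mathcal L(U,X)$. For a prospective trajectory $(x,u,y)$ introduce $\tilde x(t)\coloneqq x(t)-B_\alpha u(t)$. By Remark~\ref{rem:nodes}\,\eqref{rem:nodes5}, the operator $\sbmat\id{-B_\alpha}0\id$ maps $\dom(S)$ bijectively onto $\sbvek{\dom(A)}U$, so $\sbvek{x(t)}{u(t)}\in\dom(S)$ is equivalent to $\tilde x(t)\in\dom(A)$, and the graph norm of $S$ at $\sbvek{x(t)}{u(t)}$ is equivalent to $\|\tilde x(t)\|_X+\|u(t)\|_U$. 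Using the representation $A\&B\sbvek xu=A_{-1}x+Bu$ from Remark~\ref{rem:nodes}\,\eqref{rem:nodes2} and the resolvent identity $A_{-1}(\alpha\id-A_{-1})^{-1}=\alpha(\alpha\id-A_{-1})^{-1}-\id$, one checks that the state equation $\dot x=A\&B\sbvek xu$ with $x(0)=x_0$ is equivalent to
\[
\dot{\tilde x}(t)=A\tilde x(t)+f(t),\qquad f(t)\coloneqq B_\alpha\bigl(\alpha\,u(t)-\dot u(t)\bigr),\qquad \tilde x(0)=x_0-B_\alpha u(0).
\]

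Next one collects the regularity of the data. Since $u\in\We^{2,1}([0,T];U)$, we have $\alpha u-\dot u\in\We^{1,1}([0,T];U)$ and hence $f\in\We^{1,1}([0,T];X)$, because $B_\alpha$ is bounded; and the compatibility condition $\sbvek{x_0}{u(0)}\in\dom(S)$ gives $\tilde x(0)\in\dom(A)$ via the same bijection. By the definition of a system node, $A$ generates the $C_0$-semigroup $\fA(\cdot)$, and for such $A$ with $\tilde x(0)\in\dom(A)$ and $f\in\We^{1,1}([0,T];X)$ the function
\[
\tilde x(t)=\fA(t)\,\tilde x(0)+\int_0^t\fA(t-s)\,f(s)\,\mathrm{d}s
\]
is the unique classical solution of the Cauchy problem above, i.e.\ $\tilde x\in\Ce^1([0,T];X)\cap\Ce([0,T];\dom(A))$ and the equation holds for every $t\in[0,T]$; this is the standard result for inhomogeneous abstract Cauchy problems (see, e.g., \cite{EnNa00}).

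It then remains to transform back. Put $x\coloneqq\tilde x+B_\alpha u$. As $u\in\We^{2,1}([0,T];U)\hookrightarrow\Ce^1([0,T];U)$ and $B_\alpha$ is bounded, $x\in\Ce^1([0,T];X)$ with $x(0)=x_0$; moreover $\sbvek{x(t)}{u(t)}\in\dom(S)$ and $\dot x(t)=A\&B\sbvek{x(t)}{u(t)}$ for every $t$, by the equivalence established above. Define $y\coloneqq C\&D\sbvek xu$. Since $\tilde x\in\Ce([0,T];X)$ and $u\in\Ce([0,T];U)$, the norm equivalence noted above shows that $t\mapsto\sbvek{x(t)}{u(t)}$ is continuous into $\dom(A\&B)$ with its graph norm; combined with $C\&D\in\mathcal L(\dom(A\&B),Y)$ from Remark~\ref{rem:nodes}\,\eqref{rem:nodes1}, this yields $y\in\Ce([0,T];Y)$. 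Hence $(x,u,y)$ is a classical trajectory with $x(0)=x_0$. For uniqueness, if $(x_1,u,y_1)$ and $(x_2,u,y_2)$ are two such trajectories, then $d\coloneqq x_1-x_2$ satisfies $A_{-1}d(t)=\dot x_1(t)-\dot x_2(t)\in X$, so $\sbvek{d(t)}0\in\dom(S)$, i.e.\ $d(t)\in\dom(A)$, and $\dot d=Ad$ with $d(0)=0$; uniqueness of classical solutions of the homogeneous problem forces $d\equiv0$, whence also $y_1=y_2$.

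The main obstacle is the state regularity: the bare variation-of-constants formula for the original equation produces $x$ only as an $X_{-1}$-valued function, and upgrading it to a continuously differentiable $X$-valued trajectory is exactly what the compatibility condition $\sbvek{x_0}{u(0)}\in\dom(S)$ and the additional derivative in $u\in\We^{2,1}$ provide through the reduction — one derivative of $u$ is consumed to trade the unbounded operator $B$ for the bounded $B_\alpha$, leaving $f\in\We^{1,1}$, which is precisely the borderline regularity securing a classical solution. For well-posed systems, admissibility of $B$ already makes the variation-of-constants formula land in $\Ce([0,T];X)$ for merely $u\in\LL^2$, so that $u\in\We^{1,2}$ together with the compatibility condition suffices, and the corresponding smoothing at the output additionally yields $y\in\We^{1,2}([0,T];Y)$.
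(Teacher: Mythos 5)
The paper gives no proof of this proposition at all --- it is imported from \cite[Thm.~4.3.9]{St05} and \cite[p.~298]{St02a} --- so your argument can only be measured against Staffans' original one, and it is essentially the standard self-contained route. The reduction is sound: the change of variable $\tilde x=x-(\alpha\id-A_{-1})^{-1}Bu$ licensed by Remark~\ref{rem:nodes}\,\eqref{rem:nodes5}, the resolvent identity converting $\dot x=A_{-1}x+Bu$ into $\dot{\tilde x}=A\tilde x+B_\alpha(\alpha u-\dot u)$, the observation that $u\in\We^{2,1}$ and the compatibility condition deliver $f\in\We^{1,1}([0,T];X)$ and $\tilde x(0)\in\dom(A)$, and the classical theorem on inhomogeneous abstract Cauchy problems then produce $\tilde x\in\Ce^1([0,T];X)\cap\Ce([0,T];\dom(A))$. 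The back-transformation, the continuity of the output, and the uniqueness argument are all correct; one small caution is that for $y\in\Ce([0,T];Y)$ you genuinely need continuity of $t\mapsto\sbvek{x(t)}{u(t)}$ in the graph norm of $S$, i.e.\ the norm in Remark~\ref{rem:nodes}\,\eqref{rem:nodes5} must be read with the graph norm of $A$ on the first component --- your classical solution does supply $\tilde x\in\Ce([0,T];\dom(A))$, so the step survives, but as written you invoke only $\|\tilde x\|_X$-continuity. The one genuine shortfall concerns the last sentence of the statement: your closing paragraph on well-posed systems is a heuristic, not a proof. The admissibility remark explains why $x\in\Ce([0,T];X)$ for $u\in\LL^2$, but it does not yield $x\in\Ce^1$ nor $y\in\We^{1,2}$. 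The standard argument is by differentiation: the compatibility condition gives $A_{-1}x_0+Bu(0)\in X$, so well-posedness provides a generalized trajectory $(x_1,\dot u,y_1)$ with $x_1(0)=A_{-1}x_0+Bu(0)$, $x_1\in\Ce([0,T];X)$ and $y_1\in\LL^2([0,T];Y)$, and one then verifies $x=x_0+\int_0^{\cdot}x_1$ and $y=y(0)+\int_0^{\cdot}y_1$, which gives the claimed regularity. Without some such argument that part of the proposition remains unproved in your write-up.
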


We provide some further statements on classical/generalized trajectories.
\begin{remark}[Classical/generalized trajectories]\label{rem:sols}
Let $S = \sbvek{A\& B}{C\& D}$ be a~system node  on $(Y,X,U)$, and let $T>0$.
\begin{enumerate}[(i)]
\item\label{rem:sols1} Assume that $(x,u)$ is a~classical trajectory for $\dot{x}=A\&B\sbvek xu$. Then $\sbvek xu\in \Ce([0,T];\dom(S))$.
\item\label{rem:sols2} $(x,u)$ is a~generalized trajectory for $\dot{x}=A\&B\sbvek xu$ if, and only if,
\begin{equation}\forall\,t\in[0,T]:\quad x(t)=\fA(t)x(0)+\int_0^t \fA_{-1}(t-\tau)Bu(\tau)\,{\rm d}\tau,\label{eq:mildsol}\end{equation}
where the latter has to be interpreted as an integral in the space $X_{-1}$. In particular, $x\in \Ce([0,T];X_{-1})$.
\item\label{rem:sols3} If $(x,u,y)$ is a~generalized trajectory for \eqref{eq:ODEnode}, then, clearly, $(x,u)$ is a~generalized trajectory for $\dot{x}=A\&B\sbvek xu$. In particular, \eqref{eq:mildsol} holds. The output evaluation $y(t)=C\&D \sbvek {x(t)}{u(t)}$ is---at a first glance---not necessarily well-defined for all $t\in[0,T]$. However, it is shown in \cite[Lem.~4.7.9]{St05} that the second integral of $\sbvek {x}{u}$ is continuous as a~mapping from $[0,T]$ to $\dom(A\&B)=\dom(S)$. As a~consequence, the output can---in the distributional sense---be defined as the second derivative of $C\&D$ applied to the second integral of $\sbvek {x}{u}$. This can be used to show that $(x,u,y)$ is a~generalized trajectory for \eqref{eq:ODEnode} if, and only if, $(x,u)$ is a~generalized trajectory for $\dot{x}=A\&B\sbvek xu$, and
    \[y=\left(t\mapsto\tfrac{{\rm d}^2}{{\rm d}t^2}\,C\&D\int_0^t(t-\tau)\sbvek {x(\tau)}{u(\tau)}\,{\rm d}\tau\right)\in \LL^2([0,T];Y).\]
\end{enumerate}
\end{remark}
Next we recall the important concept of well-posed systems.
\begin{definition}[Well-posed systems]\label{def:wp}
Let $S = \sbvek{A\& B}{C\& D}$ be a~system node  on $(Y,X,U)$. The system \eqref{eq:ODEnode} is called \emph{well-posed}, if for some (and hence all) $T>0$, there exists some $c_T>0$, such that the classical (and thus also the generalized) trajectories for \eqref{eq:ODEnode} on $[0,T]$ fulfill
\[\|x(t)\|_X+\|y\|_{\LL^2([0,T];Y)}\leq c_T\big(\|x(0)\|_X+\|u\|_{\LL^2([0,T];U)}\big).\]
\end{definition}
\begin{remark}[Well-posed systems]\label{rem:wp}
Let $S = \sbvek{A\& B}{C\& D}$ be a~system node on $\triplet(Y,X,U)$ and $T>0$. Well-posedness of \eqref{eq:ODEnode} is equivalent to boundedness of the mappings
\[\begin{aligned}
\fB_T\colon &&\LL^2([0,T];U)\to&\, X,\quad&\fC_T\colon && X\to&\, \LL^2([0,T];Y),\\
\fD_T\colon &&\LL^2([0,T];U)\to&\, \LL^2([0,T];Y),
\end{aligned}\]
where
\begin{itemize}
\item $\fB_T u=x(T)$, where $(x,u,y)$ is the generalized trajectory for \eqref{eq:ODEnode} on $[0,T]$ with $x(0)=0$,
\item $\fC_T x_0=y$, where $(x,u,y)$ is the generalized trajectory for \eqref{eq:ODEnode} on $[0,T]$ with $u=0$ and $x(0)=x_0$,
\item $\fD_T u=y$, where $(x,u,y)$ is the generalized trajectory for \eqref{eq:ODEnode} on $[0,T]$ with $x(0)=0$.
\end{itemize}
In view of Remark~\ref{rem:sols}\,\eqref{rem:sols2}, we have
\[\fB_T u=\int_0^t \fA_{-1}(t-\tau)Bu(\tau)\,{\rm d}\tau\quad \forall u\in \LL^2([0,T];U).\]
In particular, well-posedness implies that the above integral is an element of $X$.
Since the domain of the generator of a $C_0$-semigroup is invariant under the semigroup operators, for each $t>0$ and $x_0\in\dom(A)$ one has $\fA(t)x_0\in\dom(A)$.
Thus, with $C$ as in Remark~\ref{rem:nodes}\,\eqref{rem:nodes1}, we have that for $y=C\fA(\cdot)x_0$, $x=\fA(\cdot)x_0$, $(x,0,y)$ is a~classical trajectory  for \eqref{eq:ODEnode} on $[0,T]$ with $x(0)=x_0$. Well-posedness implies that the mapping $x_0\mapsto C\fA(\cdot)x_0$ has an extension to a~bounded linear operator $\fC_T\colon X\to\LL^2([0,T];Y)$, see \cite[Thm.~4.7.14]{St05}.
\end{remark}

\begin{lemma}\label{lem:systext}
Let $S = \sbvek{A\& B}{C\& D}$ be a~system node on $\triplet(Y,X,U)$. Then
\begin{equation}
S_{\ext}=\begin{bmatrix}A \& B & \id\\C\& D& 0\\\id\phantom{\&}0 &0\end{bmatrix}
\end{equation}
is a~system node on $\triplet(\sbvek YX,X,\sbvek UX)$. Further, if \eqref{eq:ODEnode} is well-posed, then
\begin{equation}
\begin{bmatrix}
\dot{x}(t)\\y_{\ext}(t)
\end{bmatrix}
= S_{\ext} \begin{bmatrix}
x(t)\\u_{\ext}(t)
\end{bmatrix}\label{eq:ODEnodeext}
\end{equation}
is well-posed.
\end{lemma}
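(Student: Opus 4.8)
\textbf{Proof plan for Lemma \ref{lem:systext}.}

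The plan is to verify the four defining properties of a system node for $S_{\ext}$ directly, exploiting that $S_{\ext}$ is obtained from $S$ by a bounded (indeed, trivial) perturbation of the input-to-state and state-to-output channels together with an extra input $X$ fed straight to the state and an extra output reading off the state. First I would record the main operator of $S_{\ext}$: since setting $u_{\ext}=0$ means setting both $u=0$ and the auxiliary input to $0$, the main operator of $S_{\ext}$ is exactly the main operator $A$ of $S$, with the same domain $\dom(A)$. Hence property (iv) is immediate: $A$ generates the same $C_0$-semigroup $\fA(\cdot)$. Property (iii) is also easy: given $\sbvek{u}{x_1}\in\sbvek UX=\sbvek UX$, pick $x\in X$ with $\sbvek xu\in\dom(S)$ (possible since $S$ is a system node), and then $\sbvek{x}{\sbvek u{x_1}}\in\dom(S_{\ext})$ because the auxiliary input enters $A\&B$ as $A_{-1}x+Bu+x_1$, and adding the bounded term $x_1\in X$ does not leave $X$; more precisely $\dom(S_{\ext})=\setdef{\sbvek x{\sbvek u{x_1}}}{A_{-1}x+Bu\in X}$ by Remark \ref{rem:nodes}\,\eqref{rem:nodes2}, which is independent of $x_1$.

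For closedness (properties (i) and (ii)) I would argue that $S_{\ext}$ differs from the operator $\sbvek{A\&B}{C\&D}$ (acting on the $\sbvek xu$ part) only by bounded operators: on $\dom(S_{\ext})=\dom(S)\times X$ we have
\[
S_{\ext}\begin{bmatrix}x\\u\\x_1\end{bmatrix}=\begin{bmatrix}A\&B\sbvek xu\\ C\&D\sbvek xu\\0\end{bmatrix}+\begin{bmatrix}x_1\\0\\x\end{bmatrix},
\]
and the second summand is a bounded operator from $\sbvek XU\times X$ to $X_{-1}\times Y\times X$ (even into $X\times Y\times X$). Since $S$ is closed and $P_XS=A\&B$ is closed with domain $\dom(S)$, and a bounded perturbation of a closed operator with unchanged domain is closed, both $S_{\ext}$ and $P_{\sbvek XX}S_{\ext}=[A\&B\ \ \id]$ (the state-component of $S_{\ext}$) are closed. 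This gives (i) and (ii).

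For the well-posedness claim, I would use the characterization in Remark \ref{rem:wp}: well-posedness of \eqref{eq:ODEnodeext} is equivalent to boundedness of the three maps $\fB^{\ext}_T,\fC^{\ext}_T,\fD^{\ext}_T$ associated to $S_{\ext}$. Writing the input as $u_{\ext}=\sbvek{u}{v}$ with $u\in\LL^2([0,T];U)$, $v\in\LL^2([0,T];X)$, the state equation becomes $\dot x=A\&B\sbvek xu+v$, i.e. the original well-posed system driven additionally by a distributed $\LL^2$ forcing $v$; the state map is $x(T)=\fB_Tu+\int_0^T\fA_{-1}(T-\tau)v(\tau)\,{\rm d}\tau$, and the extra term defines a bounded map $\LL^2([0,T];X)\to X$ because $A$ generates a $C_0$-semigroup (so $\fA_{-1}\in\mathcal L(X_{-1})$ is uniformly bounded on $[0,T]$ and $B=\id$ here is bounded $X\to X$, hence the convolution is bounded $\LL^2\to X$ by the well-posedness-type estimate, cf.\ the identity for $\fB_T$ in Remark \ref{rem:wp} applied with input space $X$). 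The output $y_{\ext}=\sbvek{y}{x}$ consists of the original output $y$ (handled by boundedness of $\fC_T,\fD_T$ plus the bounded map $v\mapsto(\text{contribution of }v\text{ to }y)$, which is bounded because $y$ depends on the state and the distributed input only through the convolution, cf.\ Remark \ref{rem:sols}\,\eqref{rem:sols3} and the well-posed input map $\fD_T$) together with $x$ itself, and boundedness of $u_{\ext}\mapsto x\in\LL^2([0,T];X)$ follows from boundedness of $u_{\ext}\mapsto x\in\Ce([0,T];X)$, i.e. from the state estimate just discussed. Collecting these estimates gives the well-posedness inequality of Definition \ref{def:wp} for $S_{\ext}$.

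\textbf{Main obstacle.} The only genuinely non-formal point is the well-posedness of the distributed-input-to-output map, i.e.\ that feeding an $\LL^2$ forcing $v$ into $\dot x=A\&B\sbvek xu+v$ produces an $\LL^2$ output $y$; here one must be a little careful because $y=C\&D\sbvek xu$ is not a pointwise-defined expression in general, and one should instead view $v$ as an ordinary $U$-type input through the extended-system machinery or invoke that well-posedness is stable under such bounded feedthrough-type modifications. I expect this to be the step that needs the most care, while everything else is a routine check of the definitions.
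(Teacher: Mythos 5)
Your proposal is correct and follows essentially the same route as the paper, which omits the proof entirely, remarking only that the system-node axioms are straightforward to verify and that the well-posedness claim is a combination of Remark~\ref{rem:wp} with \cite[Thm.~4.4.4\&4.4.8]{St05} --- i.e.\ precisely the decomposition you carry out into a bounded perturbation of $S$ plus a bounded distributed-input channel and a bounded state-readout. The one step you rightly flag as delicate (boundedness of the map from the $\LL^2$ forcing $v$ to the output $y$) is exactly the content of those cited theorems on adjoining bounded input/output operators to a well-posed system, so invoking them there closes the argument.
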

It is straightforward to verify that $S_{\ext}$ is a~system node.
The proof of the equivalence between well-posedness of \eqref{eq:ODEnode} and \eqref{eq:ODEnodeext} consists of a~straightforward combination of Remark~\ref{rem:wp} with \cite[Thm.~4.4.4\&4.4.8]{St05} and is therefore omitted.

Next we recap the notion of partial flow inverse from \cite[Def.~6.6.6]{St05}, which will turn out to be corresponding to a~system in which the second part of input is interchanged with the second part of the output.
\begin{definition}[Partial flow inverse] \label{def:partflow}
A system node $ S = \left[ \begin{smallmatrix}  \phantom{[} A\& B \phantom{]_1} \\ [C\& D]_1 \\ [C \& D]_2 \end{smallmatrix} \right] $ on $\left(\sbvek{\mathfrak Y}{ Y} , X, \sbvek{\mathfrak U}{ U} \right)$ with main operator $A$, control operator $ B = [\mathfrak B \ \widehat B]$ and observation operator $ C = \sbvek{\mathfrak C}{\widehat C}$ is called \emph{partially flow-invertible} if there exists a system node $S^{\curvearrowleft} = \left[ \begin{smallmatrix}    [A\& B]^{\curvearrowleft} \\ [C\& D]_{1}^{\curvearrowleft} \\ [C \& D]_{2}^{\curvearrowleft} \end{smallmatrix} \right] $ on $\left(\sbvek{\mathfrak Y}{ U}, X, \sbvek{\mathfrak U}{ Y} \right)$ satisfying the following condition:
the operator $\left[ \begin{smallmatrix}    1&0&0 \\ 0&1&0 \\ [ C & \& & D]_2 \end{smallmatrix}\right]$ maps $\dom(S)$ continuously onto $\dom(S^{\curvearrowleft})$, its inverse is $\left[ \begin{smallmatrix}    1\ 0 \ 0 \\ 0\ 1\ 0 \\ [ C  \&  D]_2^{\curvearrowleft} \end{smallmatrix}\right]$ and 
\begin{align*}
    S &= \begin{bmatrix}
        [A \& B]^{\curvearrowleft}\\ [C \& D]_1^{\curvearrowleft}\\ 0\ 0 \ 1 
    \end{bmatrix} \begin{bmatrix}
        1\ 0\ 0 \\ 0\ 1\ 0\\ [C \& D]_2^{\curvearrowleft}
    \end{bmatrix}^{-1} && \textrm{on } \dom(S), \\
    S^{\curvearrowleft} &= \begin{bmatrix}
        \phantom{[}A \& B\phantom{]_2}\\ [C \& D]_1\\ 0\ 0 \ 1 
    \end{bmatrix} \begin{bmatrix}
        1\ 0\ 0 \\ 0\ 1\ 0\\ [C \& D]_2
    \end{bmatrix}^{-1} && \textrm{on } \dom(S^{\curvearrowleft}).
\end{align*}
In this case we call $S$ and $S^{\curvearrowleft}$ \emph{partial flow-inverses} of each other.

If $S$ is partially flow invertible then for the transfer function $\widehat{\mathfrak D}= \sbmat {\widehat{\mathfrak D}_{11}}{\widehat{\mathfrak D}_{12}}{\widehat{\mathfrak D}_{21}}{\widehat{\mathfrak D}_{22}}$  of $S$ and for the transfer function $\widehat{\mathfrak D}^\curvearrowleft$ of the system node $S^\curvearrowleft$ 
\[ \widehat{\mathfrak D}_{22}(\alpha) \quad \text{and} \quad  \widehat{\mathfrak D}^\curvearrowleft_{22}(\alpha)\]
are invertible for all $\alpha \in \rho(A)\cap \rho(A^\curvearrowleft)$ and we have $\widehat{\mathfrak D}_{22}^\curvearrowleft (\alpha) = [\widehat{\mathfrak D}_{22}(\alpha)]^{-1}$ (see \cite[Thm. 6.6.9\&6.6.10]{St05}).
\end{definition}

\begin{proposition}[{Partial flow-invertibility, \cite[Thm.~6.6.11]{St05}}]
\label{prop:sysnodepartflowinv}
    A system node  $S = \left[ \begin{smallmatrix}
   \phantom{[} A\& B \phantom{]_1} \\ [C\& D]_1 \\ [C \& D]_2
\end{smallmatrix} \right] $  on $\left( \sbvek{\mathfrak Y}{ Y},X, \sbvek{\mathfrak U}{ U} \right)$ is partially flow-invertible if and only if there exists some $\alpha \in \C$ such that the following two statements are valid:
\begin{enumerate}
    \item The operator $\left[  \begin{smallmatrix}
        \alpha \id& 0&0 \\ 0&\id&0 \\ 0&0&0
    \end{smallmatrix}  \right] - \left[ \begin{smallmatrix}
        A\& B \\ 0 \\ [C \& D]_2
    \end{smallmatrix} \right] $ maps $\dom(S)$ bijectively to $\left[ \begin{smallmatrix}
        X \\ \mathfrak U \\  Y
    \end{smallmatrix} \right]$.
    \item By denoting 
    \[  \left(\left[  \begin{smallmatrix}
        \alpha \id& 0&0 \\ 0&\id &0 \\ 0&0&0
    \end{smallmatrix}  \right] - \left[ \begin{smallmatrix}
        A\& B \\ 0 \\ [C \& D]_2
    \end{smallmatrix} \right]\right)^{-1} = \left[ \begin{smallmatrix}{M_{11}}&{M_{12}}&{M_{13}}\\{M_{21}}&{M_{22}}&{M_{23}}\\{M_{31}}&{M_{32}}&{M_{33}}\end{smallmatrix} \right], \]
    $M_{11}$ is injective, has dense range and $-M_{11}^{-1}$ generates a strongly continuous semigroup.
\end{enumerate}
\end{proposition}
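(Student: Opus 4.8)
The result is \cite[Thm.~6.6.11]{St05}, so in the paper itself one simply refers to Staffans; here is the line of argument a direct proof would follow. Abbreviate
\[
T_\alpha \coloneqq \left[\begin{smallmatrix}\alpha\id & 0 & 0\\ 0 & \id & 0\\ 0 & 0 & 0\end{smallmatrix}\right] - \left[\begin{smallmatrix}A\&B\\ 0\\ [C\&D]_2\end{smallmatrix}\right] \colon \dom(S)\to\sebvek{X}{\mathfrak U}{Y},
\]
so that $T_\alpha\sebvek{x}{\mathfrak u}{u}$ is the triple with entries $\alpha x-(A\&B)\sebvek{x}{\mathfrak u}{u}$, $\mathfrak u$ and $-[C\&D]_2\sebvek{x}{\mathfrak u}{u}$; conditions (1) and (2) are then assertions about $T_\alpha$ and about the upper-left block of $T_\alpha^{-1}$. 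The guiding device is the domain characterisation $\dom(A\&B)=\setdef{\sbvek xu}{A_{-1}x+Bu\in X}$ of Remark~\ref{rem:nodes}\,\eqref{rem:nodes2}: it turns the problem ``find $\sbvek xu\in\dom(S)$ with $\alpha x-(A\&B)\sbvek xu=w$'' into the honest resolvent computation $x=(\alpha\id-A_{-1})^{-1}(w+Bu)$, valid whenever $\alpha$ lies in the resolvent set of the relevant main operator.

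\emph{Necessity.} Suppose $S$ admits a partial flow inverse $S^{\curvearrowleft}$ as in Definition~\ref{def:partflow}, with main operator $A^{\curvearrowleft}$, and pick $\alpha\in\rho(A^{\curvearrowleft})$; this set is nonempty since $A^{\curvearrowleft}$ generates a $C_0$-semigroup. Given $\sebvek{w_1}{w_2}{w_3}\in\sebvek X{\mathfrak U}Y$, feed the inputs $\spvek{\mathfrak u}{y}=\spvek{w_2}{-w_3}$ into $S^{\curvearrowleft}$: applying the domain characterisation to the node $S^{\curvearrowleft}$ produces a unique $x\in X$ with $\sebvek{x}{w_2}{-w_3}\in\dom(S^{\curvearrowleft})$ and $\alpha x-[A\&B]^{\curvearrowleft}\sebvek{x}{w_2}{-w_3}=w_1$; applying $\left[\begin{smallmatrix}\id\ 0\ 0\\ 0\ \id\ 0\\ [C\&D]_2^{\curvearrowleft}\end{smallmatrix}\right]$ transports this element to some $\sebvek{x}{w_2}{u}\in\dom(S)$, and the intertwining identities of Definition~\ref{def:partflow} give $T_\alpha\sebvek{x}{w_2}{u}=\sebvek{w_1}{w_2}{w_3}$; uniqueness of the preimage follows the same way, giving (1). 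Taking $\sebvek{w_1}{0}{0}$ and comparing with $A^{\curvearrowleft}x=P_XS^{\curvearrowleft}\sebvek{x}{0}{0}$ then shows $M_{11}=(\alpha\id-A^{\curvearrowleft})^{-1}$, whence $M_{11}$ is injective with dense range $\dom(A^{\curvearrowleft})$ and $-M_{11}^{-1}=A^{\curvearrowleft}-\alpha\id$ generates a $C_0$-semigroup, which is (2).

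\emph{Sufficiency.} Conversely, suppose (1) and (2) hold for some $\alpha$, and set $[M_{ij}]\coloneqq T_\alpha^{-1}$. By (2) the operator $A^{\curvearrowleft}\coloneqq\alpha\id-M_{11}^{-1}$ generates a $C_0$-semigroup, being the exponential rescaling of the one generated by $-M_{11}^{-1}$. Using (1) and the domain characterisation one shows that $\left[\begin{smallmatrix}\id\ 0\ 0\\ 0\ \id\ 0\\ [C\&D]_2\end{smallmatrix}\right]$ maps $\dom(S)$ bijectively onto a subspace $\dom(S^{\curvearrowleft})\subseteq\sebvek X{\mathfrak U}Y$, so that
\[
S^{\curvearrowleft}\coloneqq\left[\begin{smallmatrix}\phantom{[}A\&B\phantom{]_1}\\ [C\&D]_1\\ 0\ 0\ \id\end{smallmatrix}\right]\left[\begin{smallmatrix}\id\ 0\ 0\\ 0\ \id\ 0\\ [C\&D]_2\end{smallmatrix}\right]^{-1}
\]
unambiguously defines a linear operator on $\dom(S^{\curvearrowleft})$. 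Closedness of $S^{\curvearrowleft}$ and the node axioms (i)--(iii) then follow from the corresponding properties of $S$ together with the boundedness of $[C\&D]_1$ and $[C\&D]_2$ on $\dom(A\&B)$ in the graph norm (Remark~\ref{rem:nodes}\,\eqref{rem:nodes1}), while a short computation identifies the main operator of $S^{\curvearrowleft}$ with $A^{\curvearrowleft}$, so node axiom (iv) is exactly (2). The intertwining identities of Definition~\ref{def:partflow} are immediate from the shape of the formula, so $S^{\curvearrowleft}$ is a partial flow inverse of $S$.

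The hard part is entirely the unbounded-operator bookkeeping: every inversion and every ``solve the resolvent equation'' step has to be routed through the extended operator $[A_{-1}\ B]\colon\sbvek XU\to X_{-1}$ and the identity $\dom(A\&B)=\setdef{\sbvek xu}{A_{-1}x+Bu\in X}$, and the bijection between $\dom(S)$ and $\dom(S^{\curvearrowleft})$ must be controlled in the graph norms. Within the sufficiency part, the two points needing genuine care are the injectivity of $\left[\begin{smallmatrix}\id\ 0\ 0\\ 0\ \id\ 0\\ [C\&D]_2\end{smallmatrix}\right]$ on $\dom(S)$ and the fact that the main operator of the constructed $S^{\curvearrowleft}$ is $A^{\curvearrowleft}$ itself rather than merely an operator similar to it; everything else is formal.
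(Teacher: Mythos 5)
The paper offers no proof of this proposition at all: it is imported verbatim (up to the adaptation that the flow inverse is required to be a system node rather than merely an operator node) from Staffans \cite[Thm.~6.6.11]{St05}, so there is nothing in the paper to compare your argument against except that citation. Your reconstruction is, in outline, correct and follows essentially the same route as Staffans' proof: encode everything in the operator $T_\alpha$, use the domain characterisation $\dom(A\&B)=\setdef{\sbvek xu}{A_{-1}x+Bu\in X}$ to reduce solvability to a resolvent computation, and identify $M_{11}$ with $(\alpha\id-A^{\curvearrowleft})^{-1}$. The necessity half is complete as written. In the sufficiency half the two points you flag as ``needing genuine care'' are indeed where the work lies, and both can be closed with the data you already have: injectivity of $\left[\begin{smallmatrix}\id\ 0\ 0\\ 0\ \id\ 0\\ [C\&D]_2\end{smallmatrix}\right]$ on $\dom(S)$ follows by applying $T_\alpha$ to a difference $(0,0,u-u')$ in its kernel, using bijectivity of $T_\alpha$ together with injectivity of $M_{11}$ to force $u=u'$; and the graph-norm continuity of the transport map and its inverse (hence closedness of $S^{\curvearrowleft}$ and of $P_XS^{\curvearrowleft}$) follows from the open mapping theorem applied to $T_\alpha\colon(\dom(S),\|\cdot\|_{\dom(S)})\to\sebvek{X}{\mathfrak U}{Y}$, which is bounded and bijective between Banach spaces since $S$ is closed. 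With those two observations spelled out, your sketch is a faithful proof of the cited result.
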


The above definition yields that the partial flow inverse of a partially flow-invertible system node is unique. Although, in the general setting the partial flow inverse of a system node is only an operator node our assumptions of dissipativity guarantee it to be a system node, see Remarks \ref{rem:disssyst} and \ref{rem:flowinvdiss}. Hence, we include this property into our definition.
The main motivation for flow-inverses is that they interchange input and output. This is the subject of the following result, which is a~slight reformulation of \cite[Thm.~6.6.15]{St05}.

\begin{proposition}[{Trajectories and flow-inverse}]\label{prop:trajectories}
    Let $S$ be a flow-invertible system node on $\triplet(\sbvek{\mathfrak Y}{ Y}, X, \sbvek{\mathfrak U}{ U}  )$ whose flow-inverse $S^{\curvearrowleft}$ is also a system node. Then $\left( x, \sbvek{\mathfrak u}{ u}, \sbvek{\mathfrak y}{ y} \right)$ is a classical (generalized) trajectory on $[0,T]$ if and only if $\left( x, \sbvek{\mathfrak u}{ y}, \sbvek{\mathfrak y}{ u} \right)$ is a classical (generalized) trajectory for the system associated to the node $S^{\curvearrowleft}$.
\end{proposition}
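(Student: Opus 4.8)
The plan is to reduce the claim to the purely algebraic factorizations of $S$ and $S^\curvearrowleft$ through one another that are built into Definition~\ref{def:partflow}, establishing the statement for classical trajectories first and then obtaining the generalized case by a soft continuity argument.

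First I would take a classical trajectory $(x,\sbvek{\mathfrak u}{u},\sbvek{\mathfrak y}{y})$ for $S$ on $[0,T]$, so that $\sebvek{x(t)}{\mathfrak u(t)}{u(t)}\in\dom(S)$ and $\sebvek{\dot x(t)}{\mathfrak y(t)}{y(t)}=S\sebvek{x(t)}{\mathfrak u(t)}{u(t)}$ for every $t$, and I would propose $\bigl(x,\sbvek{\mathfrak u}{y},\sbvek{\mathfrak y}{u}\bigr)$ as the corresponding object for $S^\curvearrowleft$. The essential input is that, by Definition~\ref{def:partflow}, the ``output-swapping'' operator for $S$ (the one that keeps $x$ and $\mathfrak u$ and replaces the internal input $u$ by $[C\&D]_2\sebvek{x}{\mathfrak u}{u}$) maps $\dom(S)$ bijectively onto $\dom(S^\curvearrowleft)$, its inverse being the analogous operator for $S^\curvearrowleft$ built from $[C\&D]_2^\curvearrowleft$. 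Since $y(t)=[C\&D]_2\sebvek{x(t)}{\mathfrak u(t)}{u(t)}$, this immediately yields $\sebvek{x(t)}{\mathfrak u(t)}{y(t)}\in\dom(S^\curvearrowleft)$ for all $t$ and, reading the inverse relation the other way, $[C\&D]_2^\curvearrowleft\sebvek{x(t)}{\mathfrak u(t)}{y(t)}=u(t)$. Substituting the first factorization of $S$ from Definition~\ref{def:partflow} into the trajectory equation and using that the output-swapping operator sends $\sebvek{x}{\mathfrak u}{u}$ to $\sebvek{x}{\mathfrak u}{y}$, the equation $\sebvek{\dot x}{\mathfrak y}{y}=S\sebvek{x}{\mathfrak u}{u}$ reduces to $\dot x=[A\&B]^\curvearrowleft\sebvek{x}{\mathfrak u}{y}$ and $\mathfrak y=[C\&D]_1^\curvearrowleft\sebvek{x}{\mathfrak u}{y}$, its last component being the tautology $y=y$. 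Combined with $[C\&D]_2^\curvearrowleft\sebvek{x}{\mathfrak u}{y}=u$ this is precisely $\sebvek{\dot x}{\mathfrak y}{u}=S^\curvearrowleft\sebvek{x}{\mathfrak u}{y}$; the $\Ce^1$/$\Ce$ regularity of the components is unaffected by permuting the scalar-valued slots, so $\bigl(x,\sbvek{\mathfrak u}{y},\sbvek{\mathfrak y}{u}\bigr)$ is a classical trajectory for $S^\curvearrowleft$. The converse implication is the same computation with $S$ and $S^\curvearrowleft$ interchanged, which is legitimate because they are partial flow-inverses of one another.

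For the generalized case I would note that the ``swap'' assignment $(x,\mathfrak u,u,\mathfrak y,y)\mapsto(x,\mathfrak u,y,\mathfrak y,u)$ is a linear isometric isomorphism between $\Ce([0,T];X)\times\LL^2([0,T];\sbvek{\mathfrak U}{U})\times\LL^2([0,T];\sbvek{\mathfrak Y}{Y})$ and $\Ce([0,T];X)\times\LL^2([0,T];\sbvek{\mathfrak U}{Y})\times\LL^2([0,T];\sbvek{\mathfrak Y}{U})$, since it only relabels the $U$- and $Y$-valued factors. By the classical case it carries the set of classical trajectories of $S$ bijectively onto that of $S^\curvearrowleft$, and being a homeomorphism it carries their closures onto one another, i.e.\ generalized trajectories of $S$ onto generalized trajectories of $S^\curvearrowleft$. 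That is the assertion.

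I expect no genuine obstacle here: the content is essentially bookkeeping — keeping straight on which domains the two swapping operators actually invert one another, and confirming that the third output produced by $S^\curvearrowleft$ is the original internal input $u$ itself rather than merely some function related to it. Definition~\ref{def:partflow} is engineered to encode exactly this inverse relationship, and the only nontrivial fact borrowed from \cite{St05}, namely that $S^\curvearrowleft$ is again a system node, is part of the hypothesis.
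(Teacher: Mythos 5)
Your argument is correct. The paper itself gives no proof of this proposition --- it is stated as ``a slight reformulation of \cite[Thm.~6.6.15]{St05}'' and the verification is delegated to Staffans' monograph --- and what you write is precisely the intended derivation: the two factorizations and the mutual inverses $\left[\begin{smallmatrix}1&0&0\\0&1&0\\ [C\&D]_2\end{smallmatrix}\right]$, $\left[\begin{smallmatrix}1&0&0\\0&1&0\\ [C\&D]_2^\curvearrowleft\end{smallmatrix}\right]$ in Definition~\ref{def:partflow} turn the pointwise trajectory equation for $S$ into the one for $S^\curvearrowleft$ with $u$ and $y$ exchanged, and the generalized case follows because the swap is a linear homeomorphism of the ambient product spaces carrying the set of classical trajectories of $S$ onto that of $S^\curvearrowleft$, hence closures onto closures. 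The only cosmetic slip is calling the $U$- and $Y$-valued slots ``scalar-valued''; nothing in the argument depends on that.
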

As the system node is split up, we also introduce the notation of the subsystems. Let $\Sigma=\sbmat {\Sigma_{11}}{\Sigma_{12}}{\Sigma_{21}}{\Sigma_{22}}$ be the system corresponding to the system node $S$ with transfer function $\widehat{\mathfrak D}= \sbmat {\widehat{\mathfrak D}_{11}}{\widehat{\mathfrak D}_{12}}{\widehat{\mathfrak D}_{21}}{\widehat{\mathfrak D}_{22}}$. This notation will come in handy when talking about the special well-posedness in the next section.

\section{Maximal monotonicity of the operator \texorpdfstring{$M$}{M}}\label{sec:maxmon}

In this section we discuss some important properties of the operator $M$ defined in \eqref{eq:Mop2}.
The most important one is the so-called maximal monotonicity, which is defined as follows.
\begin{definition}
Let $X$ be a Hilbert space with inner product $\langle \cdot, \cdot\rangle$. A set $Y\subset \sbvek X X$ is called {\em monotone}, if
\[
  \Re\langle x-u,y-v \rangle\ge 0, \qquad \sbvek xy,\sbvek uv\in Y.
\]
Further, $Y\subset \sbvek X X$ is called  {\em maximal monotone}, if it is monotone and not a proper subset of a monotone subset of $\sbvek X X$.
A (possibly nonlinear) operator $A\colon \dom(A)\subset X\rightarrow X$ is called {\em (maximal) monotone}, if the graph of $A$, i.e., $\setdef{\sbvek x{Ax}}{x\in \dom(A)}$, is (maximal) monotone.\\
A set $Y\subset \sbvek X X$ is called {\em dissipative}, if
\[
  \Re\langle x,y \rangle\le 0, \qquad \sbvek xy\in Y.
\]
Further, $Y\subset \sbvek X X$ is called  {\em maximal dissipative}, if it is dissipative and not a proper subset of a dissipative subset of $ \sbvek X X$.
A (possibly nonlinear) operator $A\colon \dom(A)\subset X\rightarrow X$ is called {\em (maximal) dissipative}, if the graph of $A$, i.e., $\setdef{\sbvek x{Ax}}{x\in \dom(A)}$, is (maximal) dissipative.
\hfill $\Box$
\end{definition}
Here are some simple implications and equivalences of this property.

\begin{remark}\label{rem:inv}
Let $A\colon \dom(A)\subset X\rightarrow X$ be an operator.
\begin{enumerate}[(i)]
\item If $A$ is linear then $A$ is (maximal) dissipative if, and only if, $-A$ is (maximal) monotone.
\item Assume that $A$ is monotone.
It follows from the definition of monotonicity that $\id+\lambda A$ is injective for all $\lambda>0$.\\
Moreover, the Minty--Browder theory yields the equivalence of the following three statements, see, e.g., \cite[Theorem 2.2~\& p.~34]{Bar10}:
\begin{enumerate}[(i)]
\item $A$ is maximal monotone.
\item $\id+\lambda A$ is surjective for some $\lambda>0$.
\item $\id+\lambda A$ is surjective for all $\lambda>0$.
\end{enumerate}
Consequently, if $A$ is maximal monotone, then $\id+\lambda A$ is bijective for all $\lambda>0$. The Cauchy--Schwarz inequality yields that
\[\|( \id+\lambda A)x-( \id+\lambda A)y\|\geq \|x-y\|\quad \forall x,y\in \dom(A),\]
hence $( \id+\lambda A)^{-1}\colon X\rightarrow X$ is contractive. %

Furthermore, $(\id-\lambda A)(\id+{\lambda} A)^{-1}$ is contractive.
 This follows with $\tilde x\coloneqq (\id+{\lambda} A)^{-1}x$ and $\tilde y\coloneqq (\id+{\lambda} A)^{-1}y$ from
\begin{align*}
\|x-y\|^2-\|(\id -\lambda A)&(\id+{\lambda} A)^{-1}x -(\id-\lambda A)(\id+{\lambda} A)^{-1}y\|^2\\
& =\|\tilde x-\tilde y+{\lambda} A\tilde x-{\lambda} A\tilde y\|^2-\|\tilde x-\tilde y-(\lambda A\tilde x-\lambda A \tilde y)\|^2\\
& =\Re\lambda\langle \tilde x-\tilde y,A\tilde x-A\tilde y\rangle\ge0.
\tag*{$\Box$}
\end{align*}
\item Consequently, if $A$ is linear and maximal dissipative, then $\lambda \id-A$ is surjective for all $\lambda>0$.
\end{enumerate}
\end{remark}

\begin{remark}\label{rem:disssyst}
Let $S = \sbvek{A\& B}{C\& D}\colon \dom(S)\subset\sbvek XU\to\sbvek XU$ be an operator with the properties as specified in Assumptions~\ref{ass:disssyst}.
\begin{enumerate}[(i)]
\item\label{rem:disssyst1} Dissipativity of $\sbvek{\phantom{-}A\& B}{-C\& D}$ directly implies that $A$ is dissipative. Using Remark~\ref{rem:inv} together with $\rho(A)\cap\C_+\neq\emptyset$, $A$ is even maximal dissipative. By the Lumer--Phillips theorem \cite[Chap.~2, Thm.~3.15]{EnNa00}, we obtain that $A$ generates a~strongly continuous semigroup. Consequently, $S$ is a~system node.
\item\label{rem:disssyst2} It follows from \cite[Lem.~4.3]{St02a} that $\sbvek{\phantom{-}A\& B}{-C\& D}$ is maximal dissipative.
\item\label{rem:disssyst3} The transfer function $\widehat{\mathfrak D}$ of $S$ is defined on $\C_+$. Moreover, $\widehat{\mathfrak D}(s)$ is monotone (and thus maximal monotone as it is a bounded operator) for all $s\in\C_+$ \cite[Thm.~4.2]{St02a}. Further, the system
\eqref{eq:ODEnode} is  well-posed if, and only if, $\setdef{\|\widehat{\mathfrak D}(\sigma+\imath\omega)\|}{\omega\in\R}$ is bounded for some (and hence any) $\sigma>0$ \cite[Thm.~5.1]{St02a}.
\item\label{rem:disssyst4} The generalized (and thus also the classical) trajectories of \eqref{eq:ODEnode} fulfill the \emph{dissipation inequality}
\begin{equation}\|x(t)\|_X^2\leq \|x(0)\|_X^2+2\int_0^t\Re\langle u(\tau),y(\tau)\rangle_U\,{\rm d}\tau\qquad\forall\,t\in[0,T],\label{eq:dissineq}\end{equation}
see \cite[Thm.~4.2]{St02a}.
\end{enumerate}
\end{remark}

\begin{lemma}\label{lem:dissflowinv}
Assume that $S = \sebvek{\phantom{[}A\& B\phantom{]_2}}{{[C\& D]_1}}{{[C\& D]_2}}\colon \dom(S)\subset \sebvek X{\mathfrak U}U\to\sebvek X{\mathfrak U}U$ has the properties as specified in Assumptions~\ref{ass:disssyst} and is partially flow-invertible. Then the partial flow inverse $S^\curvearrowleft = \sebvek{[A\& B]^\curvearrowleft}{{[C\& D]_1^\curvearrowleft}}{{[C\& D]_2^\curvearrowleft}}$ fulfills that $\sebvek{\phantom{-}[A\& B]^\curvearrowleft}{{-[C\&D]_1^\curvearrowleft}}{{-[C\& D]_2^\curvearrowleft}}$ is dissipative.
\end{lemma}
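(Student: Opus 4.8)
The plan is to exploit the dissipation inequality together with the defining relations of the partial flow inverse. First I would recall, via Lemma~\ref{lem:systext} and Remark~\ref{rem:disssyst}\,\eqref{rem:disssyst4}, that the classical trajectories $(x,\sbvek{\mathfrak u}{u},\sbvek{\mathfrak y}{y})$ of the system associated to $S$ satisfy the dissipation inequality
\[
\|x(t)\|_X^2 \le \|x(0)\|_X^2 + 2\int_0^t \Re\Big\langle \sbvek{\mathfrak u(\tau)}{u(\tau)}, \sbvek{\mathfrak y(\tau)}{y(\tau)}\Big\rangle\,{\rm d}\tau \qquad \forall\, t\in[0,T].
\]
By Proposition~\ref{prop:trajectories}, since $S$ is partially flow-invertible with flow-inverse $S^\curvearrowleft$ (which, by assumption, is again a system node), the triple $\big(x,\sbvek{\mathfrak u}{y},\sbvek{\mathfrak y}{u}\big)$ is then a classical trajectory of the system associated to $S^\curvearrowleft$. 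The crucial observation is that the swap of the second input/output components exchanges the roles of $u$ and $y$ in the duality pairing $\Re\langle u,y\rangle_U$, so the integrand $\Re\big\langle \sbvek{\mathfrak u}{u},\sbvek{\mathfrak y}{y}\big\rangle = \Re\langle \mathfrak u,\mathfrak y\rangle_{\mathfrak U} + \Re\langle u,y\rangle_U$ is left unchanged. Hence the classical trajectories of $S^\curvearrowleft$ satisfy the \emph{same} dissipation inequality, and in particular $S^\curvearrowleft$ is a dissipative system node.

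Next I would translate ``the classical trajectories satisfy the dissipation inequality'' back into dissipativity of the operator $\sebvek{\phantom{-}[A\&B]^\curvearrowleft}{-[C\&D]_1^\curvearrowleft}{-[C\&D]_2^\curvearrowleft}$ on $\dom(S^\curvearrowleft)$. This is exactly the content of the equivalence in \cite[Thm.~4.2]{St02a} (quoted in Remark~\ref{rem:disssyst}), read in the converse direction: differentiating the dissipation inequality at $t=0$ along a classical trajectory through a given point $\sebvek{x_0}{\mathfrak u_0}{u_0}\in\dom(S^\curvearrowleft)$ yields
\[
\Re\Big\langle [A\&B]^\curvearrowleft\sebvek{x_0}{\mathfrak u_0}{u_0}, x_0\Big\rangle_X \le \Re\Big\langle \sbvek{\mathfrak u_0}{u_0}, \sbvek{[C\&D]_1^\curvearrowleft}{[C\&D]_2^\curvearrowleft}\sebvek{x_0}{\mathfrak u_0}{u_0}\Big\rangle,
\]
which rearranges precisely to $\Re\big\langle \sebvek{\phantom{-}[A\&B]^\curvearrowleft}{-[C\&D]_1^\curvearrowleft}{-[C\&D]_2^\curvearrowleft}\sebvek{x_0}{\mathfrak u_0}{u_0},\sebvek{x_0}{\mathfrak u_0}{u_0}\big\rangle\le 0$. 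To carry this out I need classical trajectories through an arbitrary prescribed point of $\dom(S^\curvearrowleft)$; here I would invoke Proposition~\ref{prop:solex} applied to $S^\curvearrowleft$ with a smooth control (e.g.\ constant, or in $\We^{2,1}$) whose initial value matches the prescribed boundary data, which is possible because $\dom(S^\curvearrowleft)$ projects onto the relevant input space.

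The main obstacle I anticipate is the bookkeeping in the first step: one must be careful that the pairing that controls dissipativity is genuinely the \emph{full} pairing $\Re\langle \mathfrak u,\mathfrak y\rangle_{\mathfrak U}+\Re\langle u,y\rangle_U$ (impedance-type on all ports), so that interchanging $u$ and $y$ really leaves it invariant, rather than, say, a pairing that only involves one port-pair. A clean way to avoid any ambiguity is to work directly with $S_{\ext}$ from Lemma~\ref{lem:systext}, for which dissipativity of the ``minus-output'' node is exactly the statement that the impedance supply rate dominates the state-energy derivative, and then observe that the partial flow inversion and the extension commute. Alternatively, one can cite \cite[Thm.~6.6.10]{St05} together with Remark~\ref{rem:disssyst}\,\eqref{rem:disssyst2} to conclude maximal dissipativity of the flow-inverse node directly, once dissipativity is established; I would include a remark to that effect (cf.\ Remark~\ref{rem:flowinvdiss}).
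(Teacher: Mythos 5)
Your proposal is correct and follows essentially the same route as the paper: dissipation inequality for trajectories of $S$, transfer to $S^\curvearrowleft$ via Proposition~\ref{prop:trajectories} and the symmetry $\Re\langle u,y\rangle=\Re\langle y,u\rangle$, then the equivalence of \cite[Thm.~4.2]{St02a} to pass back to dissipativity of the node. The paper simply cites that equivalence where you unfold it by differentiating along classical trajectories; this is a valid elaboration, not a different argument.
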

\begin{proof}
By Remark~\ref{rem:disssyst}~\eqref{rem:disssyst4}, the generalized trajectories of \eqref{eq:ODEnode} fulfill the dissipation inequality \eqref{eq:dissineq}.
By using Proposition~\ref{prop:trajectories} and the trivial fact that $\Re\langle u(\tau),y(\tau)\rangle=\Re\langle y(\tau),u(\tau)\rangle$ for all $\tau\in[0,T]$, we see that the generalized trajectories for the system associated to the node $S^\curvearrowleft$ again fulfill the dissipation inequality.
Then \cite[Thm.~4.2]{St02a} yields that $\sebvek{\phantom{-}[A\& B]^\curvearrowleft}{{-[C\&D]_1^\curvearrowleft}}{-[C\& D]_2^\curvearrowleft}$ is dissipative.
\end{proof}
\begin{remark}\label{rem:flowinvdiss}
A~consequence of Lemma~\ref{lem:dissflowinv} is that partial flow inverses of system nodes fulfilling Assumptions~\ref{ass:disssyst} again fulfill Assumptions~\ref{ass:disssyst}. In particular, $\sebvek{\phantom{-}[A\& B]^\curvearrowleft}{{-[C\&D]_1^\curvearrowleft}}{{-[C\& D]_2^\curvearrowleft}}$ is maximal dissipative.
\end{remark}

\begin{lemma}\label{lem:strdissflowinv2}
Assume that $S =\left[ \begin{smallmatrix}
{\phantom{[}A\& B\phantom{]_1}}\\ {[C\& D]_1}\\ {[C\& D]_2}
\end{smallmatrix}\right]\colon \dom(S)\subset\sebvek X{\mathfrak U}U\to\sebvek X{\mathfrak Y}Y$ has the properties as specified in Assumptions~\ref{ass:disssyst}, and let
$\gamma\geq 0$, $\delta>0$. Then the system node
\[S_{\gamma,\delta} = \left[\begin{smallmatrix}
    {(A-\gamma \id)\& B}\\ 0\phantom{\&}\id\phantom{\&}0  \\{[C\& (D+\delta \id)]_2}
\end{smallmatrix}\right]\]
is partially flow-invertible. Moreover, for the partial flow-inverse $S_{\gamma,\delta}^{\curvearrowleft}$ of $S_{\gamma,\delta}$, the subsystem $\widetilde{\Sigma}_{22}^\curvearrowleft$ (i.e., the restriction of the system node to $(Y,X,U)$) is well-posed.
Here, we denote the system given by the node $S_{\gamma,\delta}$ by $\widetilde{\Sigma}=\sbmat{\widetilde{\Sigma}_{11}}{\widetilde{\Sigma}_{12}}{\widetilde{\Sigma}_{21}}{\widetilde{\Sigma}_{22}}$ and the one for $S_{\gamma,\delta}^\curvearrowleft$ by $\widetilde{\Sigma}^\curvearrowleft$ respectively.
\end{lemma}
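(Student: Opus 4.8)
\textbf{Proof plan for Lemma~\ref{lem:strdissflowinv2}.}

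The plan is to verify the two conditions of Proposition~\ref{prop:sysnodepartflowinv} for the modified node $S_{\gamma,\delta}$, choosing the parameter $\alpha$ large and real, and then to establish well-posedness of the flow-inverse subsystem $\widetilde\Sigma^\curvearrowleft_{22}$ via the transfer-function criterion in Remark~\ref{rem:disssyst}\,\eqref{rem:disssyst3}. First I would record what the shift and the feed-through perturbation do: passing from $S$ to $S_{\gamma,\delta}$ replaces $A$ by $A-\gamma\id$ (still maximal dissipative since $\gamma\ge 0$, and $\rho(A-\gamma\id)\cap\C_+$ is still nonempty), keeps $[C\&D]_2$ but adds $\delta\id$ to its feedthrough, and—crucially—replaces the first output by the trivial map $\sbvek{x}{\sbvek{\mathfrak u}{u}}\mapsto \mathfrak u$, so that the "$\mathfrak U$" channel becomes a pure pass-through. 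For condition~(1) I must show that, for some $\alpha$, the operator $\bigl[\begin{smallmatrix}\alpha\id&0&0\\0&\id&0\\0&0&0\end{smallmatrix}\bigr]-\bigl[\begin{smallmatrix}(A-\gamma\id)\& B\\0\\ [C\&(D+\delta\id)]_2\end{smallmatrix}\bigr]$ maps $\dom(S_{\gamma,\delta})=\dom(S)$ bijectively onto $\sebvek{X}{\mathfrak U}{Y}$. The second block row is just the identity on $\mathfrak U$, so this reduces to the claim that $\sbvek{(\alpha+\gamma)\id - A\&B \;\text{(in the }x\text{-components)}}{-[C\&(D+\delta\id)]_2}$, viewed as a map of $\dom(S)$ restricted by an arbitrary prescribed $\mathfrak u$, is a bijection onto $\sbvek{X}{Y}$; equivalently, the system node $S_{\delta}'$ obtained from $S$ by the $\delta$-feedthrough shift in the internal channel and the shift $\alpha+\gamma$ in the main operator is "flow-invertible in the internal channel" for suitable $\alpha$. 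This is exactly where I would invoke maximal dissipativity: the key computation is that $\Re\langle [C\&(D+\delta\id)]_2\sbvek{x}{\sbvek{\mathfrak u}{u}},u\rangle \ge \Re\langle [C\&D]_2\sbvek{x}{\sbvek{\mathfrak u}{u}},u\rangle + \delta\|u\|^2$, which, combined with the dissipation inequality/maximal dissipativity of $\sbvek{A\&B}{-[C\&D]}$ from Remark~\ref{rem:disssyst}\,\eqref{rem:disssyst2} and a standard a-priori estimate, gives the coercivity needed to invert. Concretely, I would show that for $\alpha$ real and large the map in condition~(1) is injective by a dissipativity estimate and surjective by combining Remark~\ref{rem:inv}(iii) (surjectivity of $\lambda\id - (\text{maximal dissipative})$) with a fixed-point/perturbation argument for the bounded $\delta$-term.

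For condition~(2), after writing the inverse as a $3\times3$ block operator $[M_{ij}]$, the block $M_{11}$ is the resolvent-type operator that sends $x_0\in X$ to the $x$-component of the solution of $(\alpha+\gamma)x - A_{-1}x - Bu = x_0$, $[C\&(D+\delta\id)]_2\sbvek{x}{\sbvek{0}{u}} = 0$. I would identify $-M_{11}^{-1}$ as the main operator of the partial-flow-inverse node $S_{\gamma,\delta}^\curvearrowleft$ shifted by $\alpha$, i.e. $-M_{11}^{-1} = [A\&B]^\curvearrowleft$-main-operator $-\,\alpha\id$ up to the obvious identification; since by Lemma~\ref{lem:dissflowinv} and Remark~\ref{rem:flowinvdiss} the flow-inverse node again satisfies Assumptions~\ref{ass:disssyst}, its main operator is maximal dissipative and hence generates a $C_0$-semigroup, which is precisely what condition~(2) demands (injectivity and dense range of $M_{11}$ follow from density of $\dom(S)$ and the resolvent identity). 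This is a slightly circular-looking but legitimate route: dissipativity of the flow inverse is established \emph{before} one knows it is a system node, exactly as in Lemma~\ref{lem:dissflowinv}, so I would be careful to order the argument as: (i) conditions (1)–(2) hold $\Rightarrow$ $S_{\gamma,\delta}$ is partially flow-invertible with flow-inverse an operator node; (ii) Lemma~\ref{lem:dissflowinv} upgrades this to a genuine system node.

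It remains to show the subsystem $\widetilde\Sigma^\curvearrowleft_{22}$ on $(Y,X,U)$ is well-posed. By Remark~\ref{rem:disssyst}\,\eqref{rem:disssyst3}, this is equivalent to boundedness of $\omega\mapsto\|\widehat{\mathfrak D}^\curvearrowleft_{22}(\sigma+\imath\omega)\|$ for some $\sigma>0$, and by the transfer-function relation recorded at the end of Definition~\ref{def:partflow} we have $\widehat{\mathfrak D}^\curvearrowleft_{22}(\alpha) = [\widehat{\mathfrak D}_{\gamma,\delta,22}(\alpha)]^{-1}$, where $\widehat{\mathfrak D}_{\gamma,\delta,22}$ is the $(2,2)$-block of the transfer function of $S_{\gamma,\delta}$; the latter is $\widehat{\mathfrak D}_{22}(\,\cdot+\gamma) + \delta\id$, with $\widehat{\mathfrak D}_{22}$ the original internal transfer function, which is maximal monotone on $\C_+$ by Remark~\ref{rem:disssyst}\,\eqref{rem:disssyst3}. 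The decisive estimate is then: for $\Re s>0$, $\Re\langle(\widehat{\mathfrak D}_{22}(s+\gamma)+\delta\id)u,u\rangle \ge \delta\|u\|^2$, so $\widehat{\mathfrak D}_{22}(s+\gamma)+\delta\id$ is boundedly invertible with $\|(\widehat{\mathfrak D}_{22}(s+\gamma)+\delta\id)^{-1}\|\le 1/\delta$ uniformly in $s\in\C_+$; hence $\widehat{\mathfrak D}^\curvearrowleft_{22}$ is uniformly bounded on every right half-plane $\{\Re s\ge\sigma\}$, giving well-posedness. The main obstacle I anticipate is the bookkeeping in condition~(1)—tracking precisely which half-plane of $\alpha$ makes the perturbed operator invertible and feeding the bounded $\delta\id$ and $\gamma\id$ perturbations through the system-node machinery of \cite{St05} without losing closedness—whereas the well-posedness part is comparatively soft once the transfer-function identity and the coercivity bound $\Re\widehat{\mathfrak D}_{22}\ge 0$ are in hand.
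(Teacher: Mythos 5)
Your overall strategy (verify the two conditions of Proposition~\ref{prop:sysnodepartflowinv}, then get well-posedness of $\widetilde\Sigma_{22}^\curvearrowleft$ from the uniform bound $\|(\delta\id+\widehat{\mathfrak D}_{22}(\gamma+s))^{-1}\|\le 1/\delta$ on $\C_+$ via Remark~\ref{rem:disssyst}\,\eqref{rem:disssyst3} and the transfer-function identity for partial flow inverses) is the paper's strategy, and your well-posedness paragraph is essentially identical to the paper's argument. However, the two steps that actually carry the partial-flow-invertibility claim are not in working order. For condition~(1) you propose to take $\alpha$ large and real and to treat the $\delta$-feedthrough as a bounded perturbation handled by a fixed-point argument; but the unperturbed problem ($\delta=0$) is in general not solvable --- if $[C\&D]_2=0$ the third block row can never be surjective onto $Y$ --- so there is nothing to perturb from, and "large $\alpha$" does not help the output row. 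The missing idea is the specific choice $\alpha=\delta-\gamma$, for which one has the exact factorization
\[
\left[\begin{smallmatrix}(\delta-\gamma)\id&0&0\\0&\id&0\\0&0&0\end{smallmatrix}\right]-\left[\begin{smallmatrix}(A-\gamma\id)\&B\\0\\ [C\&(D+\delta\id)]_2\end{smallmatrix}\right]
=\diag\!\left(\id,\tfrac1\delta\id,-\id\right)\left(\delta\id-\left[\begin{smallmatrix}A\&B\\0\ 0\ 0\\-[C\&D]_2\end{smallmatrix}\right]\right),
\]
and the right-hand factor is boundedly invertible because $\sebvek{A\&B}{0}{-[C\&D]_2}$ is maximal dissipative (Remark~\ref{rem:disssyst}\,\eqref{rem:disssyst2}). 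This single identity delivers bijectivity with bounded inverse at one stroke; your coercivity observation $\Re\langle[C\&(D+\delta\id)]_2\cdot,u\rangle\ge\cdots+\delta\|u\|^2$ is the right ingredient but you never assemble it into a surjectivity proof.

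The condition~(2) step as you describe it is circular: you identify $-M_{11}^{-1}$ with (a shift of) the main operator of $S_{\gamma,\delta}^\curvearrowleft$ and then invoke Lemma~\ref{lem:dissflowinv}/Remark~\ref{rem:flowinvdiss} for its dissipativity --- but Lemma~\ref{lem:dissflowinv} presupposes that the partial flow inverse exists as a system node, which is exactly what Proposition~\ref{prop:sysnodepartflowinv} is being used to establish. Your proposed reordering does not repair this, because verifying condition~(2) is a prerequisite for concluding that $S_{\gamma,\delta}^\curvearrowleft$ exists at all. The non-circular route, which again falls out of the factorization above, is the strict accretivity $\Re\langle v,(\delta\id-L)v\rangle\ge\delta\|v\|^2$ for the maximal dissipative $L$, which transfers to the inverse and yields $\Re\langle x,M_{11}x\rangle>0$ for all $x\neq0$ directly; this gives injectivity, dense range, and maximal dissipativity of $-M_{11}^{-1}$ simultaneously, so Lumer--Phillips applies without any reference to the (not yet constructed) flow inverse.
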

\begin{proof}
Since $\left[\begin{smallmatrix}
    A\& B\\0\ 0\ 0 \\ {-[C\& D]_2}
\end{smallmatrix}\right]$ is maximal dissipative aswell by Remark~\ref{rem:disssyst}~\eqref{rem:disssyst2}, we obtain that
\[\left[\begin{smallmatrix}
    {(\delta-\gamma)\id} &0&0\\0 & \id & 0 \\ 0&0&0
\end{smallmatrix}\right]- \left[\begin{smallmatrix}
    {(A-\gamma \id)\& B}\\ 0  \\{[C\& (D+\delta \id)]_2}
\end{smallmatrix}\right] =-\left[\begin{smallmatrix}
    {(A-\delta \id)\& B}\\ 0\ -\id \ 0  \\{[C\& (D+\delta \id)]_2}
\end{smallmatrix}\right] =\left[ \begin{smallmatrix*}[r]\id &0 &0\\ 0&\tfrac{1}{\delta}\id&0\\0&0&-\id\end{smallmatrix*}\right] \left(\delta \id-\left[ \begin{smallmatrix}{\phantom{-}A\& B\phantom{]_2}}\\0\ 0\ 0 \\{-[C\& D]_2}\end{smallmatrix}\right]\right)\]
has a~bounded inverse, which we partition as $\left[ \begin{smallmatrix} {M_{11}}&{M_{12}}&{M_{13}}\\{M_{21}}&{M_{22}}&{M_{23}}\\{M_{31}}&{M_{32}}&{M_{33}}\end{smallmatrix}\right]$. Moreover, by
\[\Re\left\langle\sebvek x{\mathfrak{u}}u,\left(\delta \id- \left[ \begin{smallmatrix}{\phantom{-}A\& B\phantom{]_2}}\\0\ 0\ 0 \\{-[C\& D]_2}\end{smallmatrix}\right]  \right)\sebvek x{\mathfrak{u}}u\right\rangle \ge\delta\big(\|x\|_X^2+ \| \mathfrak u\|_{\mathfrak U}^2 +\|u\|_U^2\big)\quad\forall\,\sebvek x{\mathfrak{u}}u\in\dom(S),\]
we obtain from the construction of $M_{ij}$, $i,j=1,2,3$ that
\[\Re\Big\langle\sebvek x{\mathfrak{u}}u,\left[ \begin{smallmatrix} {M_{11}}&{M_{12}}&{M_{13}}\\{M_{21}}&{M_{22}}&{M_{23}}\\{M_{31}}&{M_{32}}&{M_{33}}\end{smallmatrix}\right]\sebvek x{\mathrm{\delta} \mathfrak u}{-u}\Big\rangle{ >}0\quad\forall\sebvek x{\mathfrak{u}}u\in \sebvek X {\mathfrak U}  U\setminus\Big\{\sebvek000\Big\}.\]
In particular,
\[\Re\langle x,M_{11}x\rangle>0\quad\forall x\in X\setminus\{0\},\]
hence $M_{11}$ is injective and has dense range. This together with the boundedness of $M_{11}$ implies that the inverse of $-M_{11}$ is again
maximal dissipative, and the Lumer--Phillips theorem \cite[Chap.~2, Thm.~3.15]{EnNa00} yields that $-M_{11}^{-1}$ generates a~strongly continuous semigroup on
$X$.
Now we can conclude from Proposition~\ref{prop:sysnodepartflowinv} that $S_{\gamma,\delta}$ possesses a~partial flow inverse $S_{\gamma,\delta}^\curvearrowleft $.\\
It remains to prove that $S_{\gamma,\delta}^\curvearrowleft$ defines a~well-posed subsystem $\widetilde{\Sigma}_{22}^\curvearrowleft$. Let $\widehat{\mathfrak D}= \sbmat {\widehat{\mathfrak D}_{11}}{\widehat{\mathfrak D}_{12}}{\widehat{\mathfrak D}_{21}}{\widehat{\mathfrak D}_{22}}$ be the transfer function of $S$. Then $s\mapsto \delta \id+\widehat{\mathfrak D}_{22}(\gamma+s)$ is the transfer function of $\widetilde{\Sigma}_{22}$. On the other hand, by Remark~\ref{rem:disssyst}~\eqref{rem:disssyst3}, $\widehat{\mathfrak D}_{22}(\gamma+s)$ is monotone for all $s\in\C_+$, which gives rise to
\[\|(\delta \id+\widehat{\mathfrak D}_{22}(\gamma+s))^{-1}\|\leq\tfrac1\delta\quad\forall \,s\in\C_+.\]
On the other hand, since $(\delta \id+\widehat{\mathfrak D}_{22}(\gamma+s))^{-1}$ is the transfer function of the subsystem $\widetilde{\Sigma}_{22}^\curvearrowleft$ by Definition~\ref{def:partflow}, we can conclude from Remark~\ref{rem:disssyst}\,\eqref{rem:disssyst3} that this subsystem is well-posed.
\end{proof}

Let $S = \left[ \begin{smallmatrix}
    \phantom{[} A\& B\phantom{]_1} \\{[C\& D]_1} \\ {[C \& D]_2} 
\end{smallmatrix}\right]\colon \dom(S)\subset \sebvek X{\mathfrak U}U\to \sebvek X{\mathfrak U}U$ be as in Assumptions~\ref{ass:disssyst}, and let $T>0$, $\mathfrak u\colon [0,T]\to \mathfrak{U} $ and $x_0\in X$. We recall the definition of the operator
\begin{subequations}\label{eq:Mop}
\begin{equation}
M\colon \dom(M)\subset H \coloneqq \LL^2([0,T];\sbvek XU)\to H
\end{equation}
with
\begin{equation}
\dom(M)=\setdef{\sbvek xu\in H}{\parbox[c]{6.2cm}{$\displaystyle{\sbvek{\dot{x}}0-\left[ \begin{smallmatrix}
\phantom{-[} A\& B \phantom{]_1} \\ {-[C\& D]_2}
\end{smallmatrix}\right] \sebvek x{\mathfrak u}u}\in H$ and $x(0)=x_0$}},
\end{equation}
\begin{equation}
M\bvek xu=\bvek{\dot{x}-A\&B\sebvek x{\mathfrak u}u}{{[C\&D]_2}\sebvek x{\mathfrak u}u}.
\end{equation}
\end{subequations}
\begin{remark}\label{rem:sol}
By $\sbvek{\dot{x}}0-\left[ \begin{smallmatrix}
   \phantom{[} A\&B\phantom{]_2} \\ {[C\&D]_2} \end{smallmatrix}\right] \sebvek x{\mathfrak u}u\in \LL^2([0,T];\sbvek X U)$, we mean that there exist $w\in \LL^2([0,T];X)$, $z\in \LL^2([0,T];U)$ such that $\sebvek x{\mathfrak u}u$ fulfills
$\dot{x}=A\&B \sebvek x{\mathfrak u}u+w$ and $z=[C\&D]_2\sebvek x{\mathfrak u}u$ in the sense of generalized trajectories in Definition \ref{def:traj}. These functions indeed fulfill
\[\sbvek wz=M\sbvek xu.\]
As the action of $M$ is defined via generalized trajectories, Remark~\ref{rem:sols}\,\eqref{rem:sols2} yields that $x\in \Ce([0,T];X_{-1})$, hence the initial condition $x(0)=x_0$ is well-defined.
Hence, generalized trajectories of the closed system \eqref{eq:singlesystem1}-\eqref{eq:coupling} are equivalent to solutions to the equation \eqref{eq:algsplitting}.
\end{remark}
Our main result of this section is presented in the following.
\begin{theorem}\label{thm:Mmon}
Let $S\colon \dom(S)\subset 
\sebvek X{\mathfrak U}U\to \sebvek X{\mathfrak U}U$ 
be as in Assumptions~\ref{ass:disssyst}, and let $T>0$, $\mathfrak u\in \LL^2([0,T];\mathfrak {U})$ and $x_0\in X$. Then the operator
$M$ as in \eqref{eq:Mop} is closed and maximal monotone.
\end{theorem}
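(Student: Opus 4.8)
I would prove, in this order, that $M$ is monotone, that $\id+\lambda M$ maps $\dom(M)$ onto $H$ for some $\lambda>0$, and then invoke the Minty--Browder equivalences recalled in Remark~\ref{rem:inv}\,(ii) to conclude that $M$ is maximal monotone. Closedness of $M$ is then automatic, since the graph of a maximal monotone operator is closed: if $\sbvek{x_k}{u_k}\to\sbvek xu$ in $H$ and $M\sbvek{x_k}{u_k}\to\sbvek wz$ in $H$, passing to the limit in the monotonicity inequality shows that $\sbvek xu$ can be adjoined to the graph of $M$, hence already lies in it.

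\textbf{Monotonicity.} Let $\sbvek{x_1}{u_1},\sbvek{x_2}{u_2}\in\dom(M)$ and set $\sbvek{w_j}{z_j}\coloneqq M\sbvek{x_j}{u_j}$. By Remark~\ref{rem:sol}, the data $x_j,u_j,w_j,z_j$ (together with the external output $[C\&D]_1\sebvek{x_j}{\mathfrak u}{u_j}$) constitute a generalized trajectory of the extended node $S_{\ext}$ from Lemma~\ref{lem:systext}, with $x_j(0)=x_0$, the $X$-valued input channel of $S_{\ext}$ carrying $w_j$ and its $X$-valued output channel carrying $x_j$. The node $S_{\ext}$ again satisfies Assumptions~\ref{ass:disssyst}: it is a system node with main operator $A$ (so \ref{ass:disssyst}\,(ii)--(iv) are inherited from $S$), and the node obtained from $S_{\ext}$ by reversing the signs of all outputs is dissipative, because in the defining inner product the cross terms produced by the $X$-valued input $w$ and the $X$-valued output $x$ cancel, so that the inequality reduces to $\Re\langle\sebvek x{\mathfrak u}u,\sebvek{A\&B}{-[C\&D]_1}{-[C\&D]_2}\sebvek x{\mathfrak u}u\rangle\le0$, which is Assumption~\ref{ass:disssyst}\,(i). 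By linearity of $S_{\ext}$ the difference of the two generalized trajectories is again a generalized trajectory of $S_{\ext}$, with zero initial state and zero external input. Applying the dissipation inequality of Remark~\ref{rem:disssyst}\,(iv) to it and evaluating at $t=T$ yields, with $\widetilde x=x_1-x_2$ and analogously for the other quantities,
\[ 0\le\|\widetilde x(T)\|_X^2\le 2\int_0^T\bigl(\Re\langle\widetilde u,\widetilde z\rangle_U+\Re\langle\widetilde w,\widetilde x\rangle_X\bigr)\,\mathrm d t =2\,\Re\Bigl\langle\sbvek{x_1}{u_1}-\sbvek{x_2}{u_2},\,M\sbvek{x_1}{u_1}-M\sbvek{x_2}{u_2}\Bigr\rangle_H, \]
which is exactly monotonicity of $M$.

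\textbf{Surjectivity of $\id+\lambda M$.} Fix $\lambda>0$ and $\sbvek fg\in H$. For $\sbvek xu\in\dom(M)$ the equation $(\id+\lambda M)\sbvek xu=\sbvek fg$ means, after setting $w\coloneqq\tfrac1\lambda(f-x)$ and $z\coloneqq\tfrac1\lambda(g-u)$, that $\dot x=A\&B\sebvek x{\mathfrak u}u+w$ and $z=[C\&D]_2\sebvek x{\mathfrak u}u$ hold in the generalized-trajectory sense with $x(0)=x_0$; rearranging, this is the initial value problem
\[ \dot x=(A-\tfrac1\lambda\id)\&B\,\sebvek x{\mathfrak u}u+\tfrac1\lambda f,\qquad [C\&(D+\tfrac1\lambda\id)]_2\,\sebvek x{\mathfrak u}u=\tfrac1\lambda g,\qquad x(0)=x_0, \]
i.e.\ the problem associated with the system node $S_{\gamma,\delta}$ of Lemma~\ref{lem:strdissflowinv2} for $\gamma=\delta=\tfrac1\lambda$, where $\tfrac1\lambda f$ enters as a distributed inhomogeneity (via the $S_{\ext}$-construction of Lemma~\ref{lem:systext}) and $\tfrac1\lambda g$ is prescribed as the second output. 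By Lemma~\ref{lem:strdissflowinv2}, $S_{\gamma,\delta}$ is partially flow-invertible, so by Proposition~\ref{prop:trajectories} the problem is equivalent to finding a generalized trajectory of $S_{\gamma,\delta}^{\curvearrowleft}$ with the (fixed) external input $\mathfrak u$, with $\tfrac1\lambda g$ now as the second \emph{input}, with $\tfrac1\lambda f$ still as the distributed inhomogeneity, with $x(0)=x_0$, and with $u$ read off as the second output. Since the subsystem $\widetilde\Sigma_{22}^{\curvearrowleft}$ of $S_{\gamma,\delta}^{\curvearrowleft}$ is well-posed by Lemma~\ref{lem:strdissflowinv2}, its extension by an $X$-valued distributed inhomogeneity is again well-posed by Lemma~\ref{lem:systext}; hence such a generalized trajectory exists, its state $x$ lying in $\Ce([0,T];X)\subset\LL^2([0,T];X)$ and its second output $u$ in $\LL^2([0,T];U)$. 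Unwinding the substitution shows $\sbvek xu\in\dom(M)$ and $(\id+\lambda M)\sbvek xu=\sbvek fg$. Thus $\id+\lambda M$ is onto $H$; in particular $\dom(M)\neq\emptyset$.

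\textbf{Main obstacle.} The surjectivity step is the substantial one. Since $S$ need not be well-posed, the resolvent equation cannot be solved directly; the whole point of Lemma~\ref{lem:strdissflowinv2} is that the substitution $\gamma=\delta=\lambda^{-1}$ together with partial flow inversion produces an equivalent problem that \emph{does} have a well-posed subsystem $\widetilde\Sigma_{22}^{\curvearrowleft}$, and combining this with the stability of well-posedness under the $S_{\ext}$-extension (Lemma~\ref{lem:systext}) is exactly what makes the solution exist. The auxiliary verifications --- that $S_{\ext}$ inherits Assumptions~\ref{ass:disssyst}, and that the domain condition defining $M$ admits the reformulations used above --- are routine, relying again on the cancellation of the $X$-input/$X$-output cross terms and on Proposition~\ref{prop:trajectories}. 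By Remark~\ref{rem:inv}\,(ii), monotonicity plus surjectivity of $\id+\lambda M$ for one $\lambda>0$ then gives that $M$ is maximal monotone, and hence closed.
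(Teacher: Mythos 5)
Your strategy coincides with the paper's: monotonicity via the dissipation inequality for the extended node $S_{\ext}$, surjectivity of the resolvent via the partial flow inverse of $S_{\gamma,\delta}$ from Lemma~\ref{lem:strdissflowinv2} combined with Lemma~\ref{lem:systext}, and the Minty--Browder equivalences of Remark~\ref{rem:inv} to conclude. Working with $\id+\lambda M$ and $\gamma=\delta=1/\lambda$ instead of $\lambda\id+M$ and $\gamma=\delta=\lambda$ is immaterial, and your observation that closedness is automatic for a maximal monotone graph is a clean way to settle that part of the claim.

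There is, however, a genuine gap at the decisive point of the surjectivity step, namely the sentence ``hence such a generalized trajectory exists'' with the \emph{fixed} external input $\mathfrak u\in\LL^2([0,T];\mathfrak U)$. Lemma~\ref{lem:strdissflowinv2} only yields well-posedness of the subsystem $\widetilde{\Sigma}_{22}^{\curvearrowleft}$, i.e.\ of the channel from the second input (your $\tfrac1\lambda g$) to the state and the second output, and Lemma~\ref{lem:systext} extends this to the distributed inhomogeneity channel carrying $\tfrac1\lambda f$; neither statement controls the $\mathfrak u$-channel. For a merely square-integrable input in a channel that is not known to be well-posed, a generalized trajectory need not exist at all: the mild-solution formula of Remark~\ref{rem:sols}\,\eqref{rem:sols2} only produces an $X_{-1}$-valued state, which need not take values in $X$, and the limit of classical trajectories required by Definition~\ref{def:traj} need not converge. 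The paper closes exactly this hole by first invoking the existence of a solution of the coupled system with the given $\mathfrak u$ (Assumption~\ref{ass:solution}) to obtain \emph{one} generalized trajectory $(x_{\mathfrak u},u_{\mathfrak u},y_{\mathfrak u})$ with that external input, and then superposing it, by linearity, with arbitrary trajectories of the well-posed subsystem having zero external input, so as to reach every right-hand side $\sbvek{f}{g}$. Without this step (or some substitute argument for the $\mathfrak u$-channel), your proof establishes surjectivity of $\id+\lambda M$ only in those cases where a generalized trajectory with external input $\mathfrak u$ is otherwise known to exist, e.g.\ for $\mathfrak u=0$ or for smooth, compatible $\mathfrak u$.
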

\begin{proof}

\emph{Step~1:} We show that $M$ is monotone. Let $\sbvek {x_1}{u_1},\sbvek {x_2}{u_2}\in\dom(M)$. Denote
\[\sbvek {w_i}{z_i}\coloneqq M\sbvek {x_i}{u_i},\quad i=1,2.\]
Then $\sbvek {w}{z}\coloneqq \sbvek {w_1-w_2}{z_1-z_2}$, $\sbvek {x}{u}\coloneqq \sbvek {x_1-x_2}{u_1-u_2}$ fulfill $x(0)=0$, 
and
\[\sbvek{\dot{x}}{z}=\sbvek{\phantom{[}A\&B\phantom{]_2}}{{[C\&D]_2}}\sebvek{{x}}{\mathrm 0}{u}+\sbvek{w}{0},\]
which gives
\[
\left[\begin{smallmatrix}\dot{x}\\z\\x\end{smallmatrix}\right]=\underbrace{\left[\begin{smallmatrix}\phantom{[}A \& B\phantom{]_2}& \id\\{[C\& D]_2}&0\\\id\phantom{\&} 0\phantom{\&} 0&0\end{smallmatrix}\right]}_{\eqqcolon S_{\ext}}\left[\begin{smallmatrix}{x}\\ 0\\u\\w\end{smallmatrix}\right],\]
in the sense of generalized solutions.
Since $S$  fulfills Assumptions~\ref{ass:disssyst}, it is straightforward to see that so does $S_{\ext}$, too.
Then the dissipation inequality (see Remark~\ref{rem:disssyst}\,\eqref{rem:disssyst4}) yields
\begin{align*}
  0\leq &\,\tfrac12\|x(T)\|^2_X=\tfrac12\|x(T)\|^2_X-\tfrac12\|x(0)\|^2_X \\
 \leq &\,\int_0^T\Re\left\langle \sbvek{u(\tau)}{w(\tau)},\sbvek{z(\tau)}{x(\tau)}\right\rangle_{\smbvek  UX}{\rm d}\tau\\
 =&\,\int_0^T\Re\left\langle \sbvek{u(\tau)}{x(\tau)},\sbvek{z(\tau)}{w(\tau)}\right\rangle_{\smbvek  UX}{\rm d}\tau\\
 =&\,\int_0^T\Re\left\langle \sbvek{x(\tau)}{u(\tau)},\sbvek{w(\tau)}{z(\tau)}\right\rangle_{\smbvek  XU}{\rm d}\tau\\
 =&\, \int_0^T\Re\left\langle \sbvek{x_1(\tau)-x_2(\tau)}{u_1(\tau)-u_2(\tau)},\sbvek{w_1(\tau)-w_2(\tau)}{z_1(\tau)-z_2(\tau)}\right\rangle_{\smbvek X U}{\rm d}\tau\\
 =&\, \Re\left\langle \sbvek{x_1}{u_1}-\sbvek{x_2}{u_2},M\sbvek{x_1}{u_1}-M\sbvek{x_2}{u_2}\right\rangle_{\LL^2([0,T];\smbvek X U)}.
\end{align*}
\emph{Step~2:} By Remark \ref{rem:inv} (ii) we only need to show that  $\lambda \id+M$ is surjective for any given $\lambda>0$. So take $\lambda>0$, $w\in \LL^2([0,T];X)$, $z\in \LL^2([0,T];U)$.
Lemma~\ref{lem:strdissflowinv2} implies that the system node
\[S_{\lambda,\lambda} = \sebvek{(A-\lambda \id)\& B}{0\phantom{\&}\id \phantom{\&}0}{{[C\& (D+\lambda \id)]_2}}\]
is partially flow-invertible. Denote the partial flow inverse by 
\[S_{\lambda,\lambda}^{\curvearrowleft}=\sebvek{[\widetilde{A}\&\widetilde{B}]^\curvearrowleft}{0\ \id \ 0\phantom{\&}}{{[\widetilde{C}\&\widetilde{D}]_2^\curvearrowleft}}.\]
Lemma~\ref{lem:strdissflowinv2} implies that this further defines a~well-posed subsystem

Since the closed system has a solution, for fixed $\mathfrak u$ there is a (generalized) trajectory $(x_{\mathfrak u},u_{\mathfrak u}, y_{\mathfrak u})$. Due to the well-posedness of the subsystem we can choose $\widehat{u}=u-u_{\mathfrak u}$ as the new input of the subsystem and again obtain a well-defined trajectory. Doing this for arbitrary inputs yields that for fixed $\mathfrak u$ and for all internal inputs $u$ there is trajectory.
Then Lemma~\ref{lem:systext} yields that
\[S_{\lambda,\lambda,\ext}^\curvearrowleft=\left[\begin{smallmatrix}[\widetilde{A}\&\widetilde{B}]^\curvearrowleft & \id\\ 0\phantom{\&}\id\phantom{\&} 0&0\\ {[\widetilde{C}\& \widetilde{D}]_2^\curvearrowleft} &0 \\\id\phantom{\&}0\phantom{\&}0&0\end{smallmatrix}\right]\]
has the same well-posedness property. Hence, there exist $x\in \Ce([0,T];X)$ with $x(0)=x_0$ and $u\in \LL^2([0,T];U)$ with 
\begin{equation}\left[\begin{smallmatrix}\dot{x}\\\mathfrak u\\u\\x\end{smallmatrix}\right]=S_{\lambda,\lambda,\ext}^\curvearrowleft\left[\begin{smallmatrix}x\\\mathfrak u \\z\\w\end{smallmatrix}\right],\label{eq:extinvsys}\end{equation}
and thus,
\[\left[\begin{smallmatrix}\dot{x}\\ \mathfrak u\\u\end{smallmatrix}\right]=S_{\lambda,\lambda}^\curvearrowleft\left[\begin{smallmatrix}x\\ \mathfrak u\\z\end{smallmatrix}\right]+\left[\begin{smallmatrix}w\\0\\0\end{smallmatrix}\right].\]
The definition of partial flow inverse yields
\[\left[\begin{smallmatrix}\dot{x}\\ \mathfrak u\\u\end{smallmatrix}\right]=\sebvek{(A-\lambda \id)\&B}{0\quad\id\quad 0}{0 \quad 0 \quad\id}\sebvek{\id\quad 0\quad 0}{0\quad\id\quad 0}{{[C\&(D+\lambda \id)]_2}}^{-1}\left[\begin{smallmatrix}x\\\mathfrak u\\z\end{smallmatrix}\right]+\left[\begin{smallmatrix}w\\0\\0\end{smallmatrix}\right]\]
This yields together with $z=[C\&(D+\lambda \id)]_2\sebvek x{\mathfrak u}u$ 
\[\left[\begin{smallmatrix}\dot{x}\\\mathfrak u\\z\end{smallmatrix}\right]=S_{\lambda,\lambda}\left[\begin{smallmatrix}x\\\mathfrak u\\u\end{smallmatrix}\right]+\left[\begin{smallmatrix}w\\0\\0\end{smallmatrix}\right]\]
with $x(0)=x_0$. Since the second line of this equation is redundant, this is equivalent to 
\[\sbvek {\dot x}{z} = \sbvek{(A-\lambda \id)\& B}{{[C\&(D+\lambda \id)]_2}}\sebvek x{\mathfrak u}u + \sbvek w0. \]
By definition of $M$, this means that $(\lambda \id+M)\sbvek xu=\sbvek wz$ and with Remark~\ref{rem:inv} we obtain the maximal monotonicity of the operator.
 \end{proof}

\section{Proof of the main theorems}\label{sec:proof}

This section is devoted to the proof of Theorem~\ref{thm:convergence} and \ref{thm:outputconvergence}.

\begin{proof}[Proof of Theorem \ref{thm:convergence}]
Let again $\sbvek{x_k}{u_k} \in \dom(M)$ and denote $\sbvek{w_k}{z_k} \coloneqq (\id + \lambda M)\sbvek{x_k}{u_k}$.
Then, following the proof of \cite[Thm.~23]{BaGuJaRe22} we have
\begin{align}
    \kvek wz{k+1} &= (\id + \lambda M)(\id + \lambda M)^{-1}(\id - \lambda N)(\id + \lambda N)^{-1} (\id-\lambda M)\kvek xuk\notag\\
    &= (\id - \lambda N)(\id + \lambda N)^{-1} (\id-\lambda M)(\id + \lambda M)^{-1} \kvek wzk\notag\\
    &= (\id - \lambda N)(\id + \lambda N)^{-1} (2\id-(\id +\lambda M))(\id + \lambda M)^{-1} \kvek wzk\notag\\
    &= (\id - \lambda N)(\id + \lambda N)^{-1} (2\kvek xuk - \kvek wzk)
\end{align}
and analogously 
\[ \sbvek wz = (\id - \lambda N)(\id + \lambda N)^{-1} (2\sbvek xu - \sbvek wz). \]

Let $\omega > 0$  and $\LL^2_\omega([0,T];X)\coloneqq
        \{f\colon [0,T]\rightarrow X\mid \e^{-\cdot \omega} f(\cdot) \in  \LL^2([0,T];X)\}$
equipped with the norm $\| \cdot \|_{2,\omega,X}$ defined as
\[
   \|f\|_{2,\omega,X}^2= \int_0^T \e^{-2t \omega} \| f(t)\|^2_X\ds t.
\]
Analogously, define $\LL^2_{\omega}([0,T];  \sbvek X U)$ with norm $\| \cdot \|_{2,\omega, \smbvek X U}$.
Clearly, as  $T\in (0,\infty)$, we have $\LL^2_\omega([0,T];X)=\LL^2([0,T];X)$ with equivalent norms. 

Let $\sbvek {x_1}{u_1},\sbvek {x_2}{u_2}\in\dom(M)$. The dissipativity inequality
\begin{align*}
\ddts \tfrac12\|x_1(t)-x_2(t)\|^2_X
 \le &\, \Re\left\langle \sbvek{x_1(t)}{u_1(t)}-\sbvek{x_2(t)}{u_2(t)},M\sbvek{x_1}{u_1}(t)-M\sbvek{x_2}{u_2}(t)\right\rangle_{\smbvek XU}
\end{align*}
implies
\begin{align*}
\MoveEqLeft[8] \Re\left\langle \sbvek{x_1}{u_1}-\sbvek{x_2}{u_2},M\sbvek{x_1}{u_1}-M\sbvek{x_2}{u_2}\right\rangle_{2,\omega,\smbvek XU}\\
&\ge  \e^{-2\omega T}\tfrac12\|x_1(T)-x_2(T)\|^2_X +\omega \|x_1-x_2\|^2_{2,\omega,X}.
\end{align*}

Denote 
\[ \Delta \kvek xuk \coloneqq \sbvek xu - \kvek xuk, \qquad \Delta \kvek wzk \coloneqq \sbvek wz - \kvek wzk, \]
then, using the monotonicity of $M$ and $N$ we obtain for $\omega >0$
\begin{align}\label{eq:monzseq}
    \MoveEqLeft[1] \| \Delta \kvek wz{k+1}\|_{2,\omega,\smbvek XU}^2 - \|\Delta \kvek wzk\|_{2,\omega,\smbvek XU}^2 \notag\\
    &= \| (\id - \lambda N)(\id + \lambda N)^{-1}\left[(2\sbvek xu-\sbvek wz) -  (2\kvek xuk -\kvek wzk) \right]\|_{2,\omega,\smbvek XU}^2 \notag\\
    &\quad - \|\Delta \kvek wzk\|_{2,\omega,\smbvek XU}^2 \notag\\
    &\le  \| 2\Delta \kvek xuk - \Delta \kvek wzk\|_{2,\omega,\smbvek XU}^2 - \|\Delta \kvek wzk\|_{2,\omega,\smbvek XU}^2 \notag \\
    &= 4 \| \Delta \kvek xuk \|_{2,\omega,\smbvek XU}^2 - 4 \Re\left\langle \Delta \kvek xuk , \Delta \kvek wzk \right\rangle_{2,\omega,\smbvek XU} \notag\\
    &= 4 \| \sbvek xu- \kvek xuk \|_{2,\omega,\smbvek XU}^2 - 4\Re \langle \sbvek xu-\kvek xuk ,(\id + \lambda M)\sbvek xu -(\id + \lambda M)\kvek xuk \rangle_{2,\omega,\smbvek XU} \notag\\
    &= -4 \lambda \Re\langle \sbvek xu-\kvek xuk ,M\sbvek xu-M\kvek xuk  \rangle_{2,\omega,\smbvek XU} \notag \\
    &\le -2\lambda \e^{-2\omega T}\|x(T)-x_k(T)\|^2_X -4\lambda \omega \|x-x_k\|^2_{2,\omega,X} \le 0.
\end{align}
Hence, $ \| \kvek wzk - \sbvek wz\|_{2, \omega,\smbvek XU}$ is monotonically decreasing (therefore convergent) and Remark \ref{rem:inv} yields the required inequality for $\LL^2_{\omega}([0,T];X)$.
Since the monotonicity of $M$ also holds in $\LL^2([0,T];X)$, the same statement is true for $\omega = 0$.

Now rephrasing the last inequality gives 
\[ \| x-x_k \|_{2,\omega,X}^2 \le \tfrac{1}{4\lambda \omega} \left( \|\Delta \kvek wzk\|_{2,\omega,\smbvek XU}^2 - \| \Delta \kvek wz{k+1}\|_{2,\omega,\smbvek X U}^2\right), \]
which leads to \textit{b)} since the norms of $\LL^2_{\omega}([0,T];X)$ and $\LL^2([0,T];X)$ are equivalent.\\
Lastly, we repeat the calculation \eqref{eq:monzseq} but with $t \in (0,T]$ instead of $T$ in the last step and obtain
\[ \| x(t) - x_k(t) \|_X^2 \le \frac{\e^{2\omega T}}{2\lambda} \left( \|\Delta \kvek wzk\|_{2,\omega,\smbvek XU}^2 - \| \Delta \kvek wz{k+1}\|_{2,\omega,\smbvek XU}^2\right), \]
which completes the proof.
\end{proof}

\begin{proof}[Proof of Theorem \ref{thm:outputconvergence}] 
Let $\kvek xuk \in \dom(M)$, $(x,\mathfrak u, \mathfrak y)$ be a generalized trajectory and $\mathfrak y_k \coloneqq [C\&D]_1 \sebvek {x_k}{\mathfrak u}{u_k}$.
Using the (PSOP) property, the dissipation inequality refines to
\begin{align*}
\MoveEqLeft
\Re \left\langle \sbvek xu - \kvek xuk, M\sbvek xu - M\kvek xuk  \right\rangle_{2,\smbvek XU}\\
&\ge \tfrac{1}{2}\|x(T)-x_k(T)\|_X^2 - \Re \left\langle \sbvek xu - \kvek xuk, \sbvek {{A\&B}}{{-[C\&D]_2}}\left( \sebvek x{\mathfrak u}u - \sebvek {x_k}{\mathfrak u}{u_k}\right) \right\rangle_{2,\smbvek XU}  \\
&= \tfrac{1}{2}\|x(T)-x_k(T)\|_X^2 - \Re \left\langle \sebvek x{\mathfrak u}u - \sebvek {x_k}{\mathfrak u}{u_k}, \sebvek {{A\&B}}{{-[C\&D]_1}}{{-[C\&D]_2}}\left( \sebvek x{\mathfrak u}u - \sebvek {x_k}{\mathfrak u}{u_k}\right) \right\rangle_{2,\sembvek X{\mathfrak U}U}  \\
&\ge \tfrac{1}{2} \|x(T)-x_k(T)\|_X^2 + \eps \|\mathfrak y - \mathfrak y_k\|_{2,\mathfrak U}^2.
\end{align*}
Hence, \eqref{eq:monzseq} can be reformulated to
\begin{align*}
    \|\mathfrak y - \mathfrak y_k\|_{2,\mathfrak U}^2 \le \tfrac{1}{4\eps} \left(  \|\Delta \kvek wzk\|_{2,\smbvek XU}^2 - \| \Delta \kvek wz{k+1}\|_{2,\smbvek X U}^2  \right).
\end{align*}
Again, due to the convergence of $\|\Delta \kvek wzk\|_{2,\smbvek XU}^2$ (from Theorem \ref{thm:convergence}) we obtain the desired convergence of the external outputs.
\end{proof}

\section{Examples}
\subsection{A coupled wave-heat system}\label{sec:ex}

We consider the following one-dimensional coupled wave-heat system with Coleman--Gurtin thermal law \cite{DePaSe22}
\begin{align*}
v_{tt}(\zeta,t)&=v_{\zeta \zeta}(\zeta,t),&\zeta\in(-1,0), t>0,\\
w_t(\zeta,t)&=w_{\zeta \zeta}(\zeta,t)+\int_0^\infty g(s)w_{\zeta \zeta}(\zeta,t-s)\,{\rm d}s,&\zeta\in(0,1), t>0,\\
v_t(0,t)&=w(0,t),& t>0,\\
v_\zeta(0,t)&=w_\zeta(0,t)+\int_0^\infty g(s)w_{\zeta}(0,t-s)\,{\rm d}s,& t>0,\\
v(-1,t)&=0,& t>0,\\
w(1,t)&=0,& t>0,
\end{align*}
equipped with initial conditions
\begin{align*}
v(\zeta,0)&= v_0(\zeta),& v_t(\zeta,0)&= \psi(\zeta), & \zeta \in (-1,0),\\
w(\zeta,0)&= w_0(\zeta), & w(\zeta,-s)&= \varphi_0(\zeta,s), & \zeta \in (0,1), s>0,
\end{align*}
for suitable functions $v_0$, $w_0$, $\psi_0$ and $\varphi_0$.
Here the convolution kernel $g\colon [0,\infty)\rightarrow [0,\infty)$ is convex and integrable with unit total mass and of the form
\begin{align*}
g(s)&=\int_s^\infty \mu \,{\rm d}r,&s\ge 0,
\end{align*}
and $\mu\colon (0,\infty)\rightarrow [0,\infty)$ is non-increasing, absolutely continuous and integrable. Further, details of the model and the corresponding analysis can be found in \cite{DePaSe22}.

The coupled wave-heat system can be decomposed into two impedance passive system nodes which are coupled in a power conserving manner. More precisely, the wave part of the system is given by
\begin{align*}
v_{tt}(\zeta,t)&=v_{\zeta \zeta}(\zeta,t),&\zeta\in(-1,0), t>0,\\
v(-1,t)&=0   ,\quad v_t(0,t)=u_1(t),\quad y_1(t)=v_\zeta(0,t), &t>0,\\
v(\zeta,0)&= v_0(\zeta), \quad v_t(\zeta,0)= \psi(\zeta), & \zeta \in (-1,0).
\end{align*}
In \cite{DePaSe22} it has been shown that the wave part 
is an impedance passive system node $S_1=\sbvek{{A_1}\&{B_1}}{{C_1}\&{D_1}}$ on $\sebvek{\mathbb C}{\sbvek{\Hlr{\ell}}{\LL^2(-1,0)}}{\mathbb C}$, where 
\[\Hlr \ell\coloneqq \{v\in \Hlr{} \mid v(-1)=0\}.\] 
The state of the system is given by $\sbvek{v(\cdot,t)}{v_t(\cdot,t)}$. As $S_1$ is impedance passive, the operator $\sbvek{\phantom{-}{A_1}\&{B_1}}{{-C_1}\&{D_1}}$ is dissipative, see \cite[Theorem 4.2]{St02a}.

 The heat part with Coleman--Gurtin thermal law is described by
\begin{align*}
 w_t(\zeta,t)&=w_{\zeta \zeta}(\zeta,t)+\int_0^\infty \!\!g(s)w_{\zeta \zeta}(\zeta,t-s)\,{\rm d}s,&\zeta\in(0,1), t>0,\\  
 w(1,t)&=0,&t>0,\\
 \quad u_2(t)&=
 -w_\zeta(0,t)-\int_0^\infty\!\! g(s)w_{\zeta}(0,t-s)\,{\rm d}s,&t>0,\\
  y_2(t)&=w(0,t),&t>0,\\
 w(\zeta,0)&= w_0(\zeta), \quad  w(\zeta,-s)= \varphi_0(\zeta,s), & \zeta \in (0,1), s>0.
\end{align*}
The heat part is an impedance passive system system node $S_2=\sbvek{{A_2}\&{B_2}}{{C_2}\&{D_2}}$ on $(\mathbb C,\LL^2(0,1)\times \mathcal{M}, \mathbb C)$, see \cite{DePaSe22}. Here $\mathcal{M}\coloneqq \LL_\mu^2((0,\infty); \Hlr r)$, where $\LL_\mu^2$ is the space of all square-integrable functions with respect to the measure $\mu(s){\rm d}s$ and $\Hlr r\coloneqq \{w\in \Hlr{}\mid w(1)=0\}$. The state of the system is given by $\sbvek{w(\cdot,t)}{s\mapsto \int_0^s w(\cdot,t-\sigma)\, {\rm d}\sigma}$. 

Thus Assumption \ref{ass:disssyst} is satisfied and as the two systems are coupled via $u_1(t)=y_2(t)$ and $u_2(t)=-y_1(t)$ the corresponding coupling operator $N$ is skew-adjoint and thus dissipative.  Therefore our dynamic iteration scheme is applicable and it is possible to solve the heat and wave part independently and in parallel via suitable methods.

\begin{remark}
    One could also include external inputs to the system by setting the value of $v$ or $w$ equal to a given $\mathfrak u$ instead of 0.
\end{remark}

\subsection{Wave equation on an L-shaped/decomposable domain}
We consider a wave equation as in \cite{KuZw15} and use our technique to decompose the domain with a coupling on the connecting boundary.
The system is given in the following form:
\begin{align*}
    \rho(\xi) z_{tt}(\xi,t) &= \Div(T(\xi) \grad z(\xi,t)) - \left( d z_t \right) (\xi,t), \quad  \xi \in \Omega, t \ge 0, \\
    0 &= z_t (\xi ,t ) \quad \text{on } \Gamma_0 \times [0,\infty),\\
    {\mathfrak u}(\xi,t) &= \nu \cdot (T(\xi)\grad z(\xi,t)) \quad \text{on } \Gamma_1 \times [0,\infty),\\
    {\mathfrak y}(\xi,t) &= z_t(\xi,t) \quad \text{on } \Gamma_1 \times [0,\infty), \\
    z(\xi,0)&= z_0(\xi), \quad z_t(\xi,0) = \omega (\xi) \quad \text{on } \Omega,
\end{align*}
where $\nu\colon \partial\Omega\to\R^2$ is the unit outward normal vector of an L-shaped domain $\Omega\subset\R^2$. More precisely, we consider
$\Omega,\Gamma_0,\Gamma_1 \subseteq \R^2$, with
\begin{align*}
\Omega&={\mathrm{int}}(\overline{\Omega_1}\cup\overline{\Omega_2}),\\
\Omega_1&=(0,1)\times(0,2),&
\Omega_2&=(1,2)\times(0,1),\\
\Gamma_1&=(0,1)\times\{2\}\subset\partial\Omega,&
\Gamma_0&=\partial\Omega\setminus\widebar \Gamma_1.
\end{align*}
As usual, $z(\xi,t)$ denotes the displacement of the wave at point $\xi \in \Omega$ and time $t\ge 0$, $\mathfrak u$ is the input given by a force on the boundary part $\Gamma_1$ and $\mathfrak{y}$ is the output measured as the velocity at $\Gamma_1$. The physical parameters are included via the Young's modulus $T(\cdot)$ and the mass density $\rho(\cdot)$ (not to be confused with the resolvent set), which are both assumed to be measurable, positive, and they have a bounded inverses.
$d$ can be interpreted as an internal damping which is assumed to be a bounded nonnegative and measurable function on $\Omega$  (often given by a multiplication operator). 

We split the problem into two wave equations, each on the rectangles $\Omega_1$ and $\Omega_2$, which interact at the boundary interface
\[\Gamma_{\rm int}\coloneqq \{1\}\times(0,1).\]
First note that, for the sets
\begin{align*}
\Gamma_{10}&=\big(\{0\}\times[0,2)\big)\cup\big([0,1)\times\{0\}\big)\cup\big(\{1\}\times(1,2)\big),\\
\Gamma_{20}&=\big((1,2]\times\{0\}\big)\cup\big(\{2\}\times[0,1]\big)\cup\big([1,2]\times\{1\}\big).
\end{align*}
we have $\partial\Omega=\partial\Omega_1\cup\partial\Omega_2\setminus\Gamma_{\rm int}$ as well as $\partial\Omega_2=\widebar{\Gamma}_{20}\cup\widebar{\Gamma}_{\rm int}$ and $\partial\Omega_1=\widebar{\Gamma}_{10}\cup\widebar{\Gamma}_{\rm int}\cup \widebar{\Gamma}_1$ (see Figure \ref{fig:omega}). Further, the set
$\widebar \Gamma_{10}\cap\widebar \Gamma_{20}=\widebar \Gamma_{10}\cap\widebar \Gamma_{\rm int}=\widebar \Gamma_{20}\cap\widebar \Gamma_{\rm int} = \{1\}\times \{0,1\}$ is of line measure zero.\\

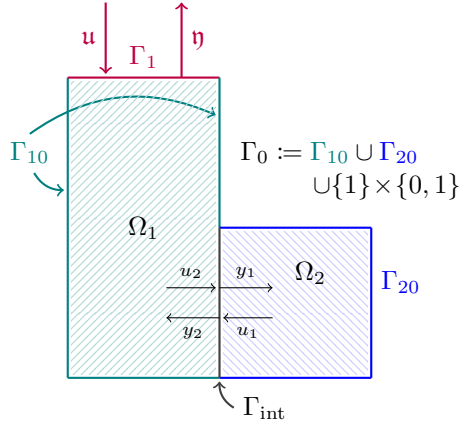
\begin{figure}
    \centering
    
\begin{tikzpicture}
  \draw[teal, thick] (0,0)--+(0,4);
  \node[teal] (Gamma10) at (-0.5,3) {$\Gamma_{10}$};
  \draw[->, teal, thick] (Gamma10) to[bend right=30] (-0.05,2.5);
   \draw[->, teal, thick] (-0.5,3.25) to[bend left=30] (1.95,3.5);
  \draw[purple, thick] (0,4)--+(2,0) node[above, midway]{$\Gamma_{1}$} ;
  \draw[teal, thick] (2,4)--+(0,-2);
  \draw[blue, thick] (2,2)--+(2,0);
  \draw[blue, thick] (4,2)--+(0,-2) node[midway, above right]{$\Gamma_{20}$};
  \draw[blue, thick] (2,0)--+(2,0);
  \draw[teal, thick] (0,0)--+(2,0);
  \draw[darkgray, thick] (2,2)--+(0,-2);

  \fill[pattern color=teal!30, pattern=north east lines] (0.05,0.05) rectangle (1.95,3.95);
  \node at (1,2) {$\Omega_{1}$};
  \fill[pattern color=blue!20, pattern=north west lines] (2.05,0.05) rectangle (3.95,1.95);
  \node at (3.2,1.4) {$\Omega_{2}$};
  \node at (3.45,3) {$\Gamma_{0} \coloneqq {\color{teal}\Gamma_{10}} \cup {\color{blue}\Gamma_{20}}$};
  \node at (4.2,2.55) {$\cup \{1\}\!\times\! \{0,1\}$};
  \node (Gammaint) at (2.6,-0.4) {$\Gamma_{\mathrm{int}}$};
   \draw[->, darkgray, thick] (Gammaint) to[bend left=30] (2,-0.05);

  \draw[->] (1.3,1.2)--+(0.65,0) node[midway, above] {\footnotesize $u_{2}$};
  \draw[->] (2,1.2)--+(0.7,0) node[midway, above] {\footnotesize $y_{1}$};
  \draw[->] (2.7,0.8)--+(-0.65,0) node[midway, below] {\footnotesize $u_{1}$};
  \draw[->] (2,0.8)--+(-0.7,0) node[midway, below] {\footnotesize $y_{2}$};

  \draw[->,purple, thick] (0.5,5) --(0.5,4.05)  node[midway, left]{$\mathfrak u$};
  \draw[<-,purple, thick] (1.5,5) --(1.5,4.)  node[midway, right]{$\mathfrak y$};

\end{tikzpicture}
    \caption{Illustration of the partition of $\Omega$ and its boundaries}
    \label{fig:omega}
\end{figure}
Now, by denoting the unit outward normals of $\Omega_1$ and $\Omega_2$ respectively by $\nu_1$ and $\nu_2$, we consider the two systems
\begin{equation}\label{eq:wave1}
\begin{aligned}
    \rho(\xi) z_{tt}(\xi,t) &= \Div(T(\xi) \grad z(\xi,t)) - \left( d z_t \right) (\xi,t), \quad  \xi \in \Omega_1, t \ge 0, \\
    0 &= z_t (\xi ,t ) \quad \text{on } \Gamma_{10} \times [0,\infty),\\
    {\mathfrak u}(\xi,t) &= \nu_1 \cdot (T(\xi)\grad z(\xi,t)) \quad \text{on } \Gamma_1 \times [0,\infty),\\
    {u}_1(\xi,t) &= \nu_1 \cdot (T(\xi)\grad z(\xi,t)) \quad \text{on } \Gamma_{\rm int} \times [0,\infty),\\
    {\mathfrak y}(\xi,t) &= z_t(\xi,t) \quad \text{on } \Gamma_1 \times [0,\infty), \\
    {y_1}(\xi,t) &= z_t(\xi,t) \quad \text{on } \Gamma_{\rm int} \times [0,\infty), \\
    z(\xi,0)&= z_0(\xi), \quad z_t(\xi,0) = \omega (\xi) \quad \text{on } \Omega_1,
\end{aligned}
\end{equation}
and
\begin{equation}\label{eq:wave2}
\begin{aligned}
    \rho(\xi) z_{tt}(\xi,t) &= \Div(T(\xi) \grad z(\xi,t)) - \left( d z_t \right) (\xi,t), \quad  \xi \in \Omega_2, t \ge 0, \\
    0 &= z_t (\xi ,t ) \quad \text{on } \Gamma_{20} \times [0,\infty),\\
    {u_2}(\xi,t) &= z_t(\xi,t) \quad \text{on } \Gamma_{\rm int} \times [0,\infty), \\
    {y}_2(\xi,t) &= \nu_2 \cdot (T(\xi)\grad z(\xi,t)) \quad \text{on } \Gamma_{\rm int} \times [0,\infty),\\
    z(\xi,0)&= z_0(\xi), \quad z_t(\xi,0) = \omega (\xi) \quad \text{on } \Omega_2,
\end{aligned}
\end{equation}
together with the interface conditions
\begin{equation}\label{eq:waveint}
u_1(\xi,t)=-y_2(\xi,t),\;\;
u_2(\xi,t)=y_1(\xi,t), \quad \text{on } \Gamma_{\rm int} \times [0,\infty).
\end{equation}
\par\noindent\textbf{Trace spaces.} 
In this paragraph $\Omega$ denotes a general bounded Lipschitz domain in $\R^2$. The results here will be applied later to the domains described previously. 
To properly introduce the right formulation and spaces for the above systems, consider the {\em trace operator} $\gamma\colon  \HH^{1}(\Omega)\to \HH^{1/2}(\partial\Omega)$ which maps $x\in \HH^{1}(\Omega)$ to its boundary
trace $x|_{\partial\Omega}$, where $\HH^{1/2}(\partial\Omega)$ denotes the Sobolev space of fractional order $1/2$ \cite{adams2003sobolev}. By the {\em trace
theorem} \cite[Thm.~1.5.1.3]{Gris85}, $\gamma$ is bounded and surjective. Further, $\HH(\Div,\Omega)$ is the space of all square integrable functions
whose weak divergence exists and is square integrable. That is, for $\HH^{1}_0(\Omega)\coloneqq \ker\gamma$,
\[z=\Div x\quad\Longleftrightarrow\quad \forall \varphi\in
\HH^{1}_0(\Omega):\;-\langle\grad\varphi,x\rangle_{\LL^2(\Omega;\C^2)}=\langle\varphi,z\rangle_{\LL^2(\Omega)}.\]
Defining $\HH^{-1/2}(\partial\Omega)\coloneqq \HH^{1/2}(\partial\Omega)^*$ with respect to the pivot space $\LL^2(\partial \Omega)$, the {\em normal trace} of $x\in \HH(\Div,\Omega)$ is well-defined by $w=\gamma_N x\in
\HH^{-1/2}(\partial\Omega)$ with
\[
 \forall z\in \HH^{1}(\Omega):\;\langle w,\gamma z\rangle_{\HH^{-1/2}(\partial\Omega),\HH^{1/2}(\partial\Omega)}=\langle \Div
 x,z\rangle_{\LL^2(\Omega)}+\langle x,\grad z\rangle_{\LL^2(\Omega;\C^2)}.\]
 Green's formula \cite[Chap.~16]{Tart07} yields that, indeed $w(\xi)=\nu(\xi)^\top x(\xi)$ for all $\xi\in\partial\Omega$, if $\Omega$ and $x$ are smooth.
 Further, $\gamma_N\colon \HH(\Div,\Omega)\to \HH^{-1/2}(\partial\Omega)$ is bounded and surjective \cite[Lem.~20.2]{Tart07}.\\
For a~relatively open set $\Gamma\subset\partial\Omega$, we consider
\[\mathrm H^1_{\Gamma}(\Omega) \coloneqq \setdef{f \in \mathrm H^1(\Omega)}{(\gamma f)\vert_{\Gamma}=0 \text{ in }\LL^2(\Gamma)}.\]
Further, for a~one-dimensional Lipschitz manifold $\Gamma\subset\partial \Omega(\subseteq\R^2)$ with boundary, $\HH^{1/2}_0(\Gamma)$ is the space of elements of $\HH^{1/2}(\Gamma)$ which can, in $\HH^{1/2}$ be extended to zero outside $\Gamma$. It can be concluded from the trace theorem that 
trace operator has a~natural restriction to a~bounded and surjective operator $\gamma_{\Gamma}\colon \HH^{1}_{\partial\Omega\setminus\Gamma}(\Omega)\to \HH^{1/2}_0(\Gamma)$, i.e., $x\in \HH^{1}_{\partial\Omega\setminus\Gamma}(\Omega)$ is mapped to its
trace $x|_{\Gamma}$. Defining $\HH^{-1/2}(\Gamma)\coloneqq \HH^{1/2}(\Gamma)^*$, the {\em normal trace} of $x\in \HH(\Div,\Omega)$ at the relatively open set $\Gamma\subset\partial\Omega$ is well-defined by $w=\gamma_{N,\Gamma} x\in
\HH^{-1/2}_{\partial\Omega\setminus\Gamma}(\Gamma)$ with
\[
 \forall z\in \HH^{1}_{\partial\Omega\setminus\Gamma}(\Omega):\;\langle w,\gamma z\rangle_{\HH^{-1/2}(\Gamma),\HH^{1/2}(\Gamma)}=\langle \Div
 x,z\rangle_{\LL^2(\Omega)}+\langle x,\grad z\rangle_{\LL^2(\Omega;\C^2)}.\]
 We further set
 \[\HH_{\Gamma}(\Div,\Omega)=\setdef{z\in \HH(\Div,\Omega)}{\gamma_{N,\Gamma} z=0}.\]
\par\noindent\textbf{System nodes.} 
Next we introduce system nodes $S_1$, $S_2$ corresponding to each of the subsystems arising from the split of $\Omega$ into $\Omega_1$ and $\Omega_2$. 
In the following, we equip the spaces $X_i=\LL^2(\Omega_i)\times \LL^2(\Omega_i;\C^2)$ with the {\em energy norm}
\[\|\spvek{p_i}{q_i}\|^2_2\coloneqq \int_{\Omega_i}\rho^{-1}(\xi)p_i(\xi)^2+T(\xi)
q_i(\xi)^\top q_i(\xi)\,{\rm d}\xi,\quad i=1,2.\]
Note that, by the assumption that $\rho,T$ are positive-valued with $T\rho,\rho^{-1},T,T^{-1}\in \LL^{\infty}(\Omega)$, the energy norm is equivalent to the standard norm in $\LL^2(\Omega_i)\times \LL^2(\Omega_i;\C^2)$.\\
Then the weak formulations of \eqref{eq:wave1} and \eqref{eq:wave2} are, for 
\[x_i(t)\coloneqq\left.\sbvek{\rho z_t}{T^{-1}\grad z}\right|_{\Omega_i}\in X_i\coloneqq \sbvek{\LL^2(\Omega_i)}{\LL^2(\Omega_i;\C^2)},\quad i=1,2,\]
given by 
\[\sebvek{\dot{x}_1}{\mathfrak y}{y_1}=S_1 \sebvek{x_1}{\mathfrak u}{u_1},\quad \sbvek{\dot{x}_2}{y_2}=S_2 \sbvek{x_2}{u_2}.\]
Hereby, for $\mathfrak{U}=\mathfrak{U}_1=\HH^{-1/2}(\Gamma_0)$, $U_1=\HH^{-1/2}(\Gamma_{\rm int})$, the first system node is given by
\[S_1\colon \dom S_1\to \sbvek{X_1}{\sbvek{\mathfrak{U}}{U_1}}\]
with
\[\dom S_1=\setdef{
\sbvek{\sbvek{p_1}{q_1}}{\sbvek{\mathfrak u}{u_1}}\in \sbvek{X_1}{\sbvek{\mathfrak{U}}{U_1}}}{
\parbox[c]{6.3cm}{$\rho^{-1}p_1\in \HH^{1}_{\Gamma_{10}}(\Omega_1)\,\wedge\,Tq_1\in \HH(\Div,\Omega_1)$ $\;\;\wedge\, {\mathfrak u}=\gamma_{N,\Gamma_1}\rho^{-1}q_1\,\wedge\, u_1=\gamma_{N,\Gamma_{\rm int}}\rho^{-1}q_1$}
}\]
and
\[S_1\left[\begin{smallmatrix} p_1\\q_1\\ {\mathfrak u}\\u_1\end{smallmatrix}\right]=
\left[\begin{smallmatrix} \Div (Tq_1)-d\rho^{-1}p_1\\\grad (\rho^{-1}p_1)\\ \gamma_{\Gamma_1}(\rho^{-1} p_1)\\\gamma_{\Gamma_{\rm int}}(\rho^{-1} p_1)\end{smallmatrix}\right].\]
Likewise, $U_2=\HH^{1/2}(\Gamma_{\rm int})$, the second system node reads
\[S_2\colon \dom S_2\to \sbvek{X_2}{\sbvek{\mathfrak{U}}{U_2}}\]
with
\[\dom S_2=\setdef{
\sbvek{\sbvek{p_2}{q_2}}{\sbvek{\mathfrak u}{u_2}}\in \sbvek{X_2}{\sbvek{\mathfrak{U}}{U_2}}}{
\parbox[c]{6.3cm}{$\rho^{-1}p_2\in \HH^{1}_{\Gamma_{20}}(\Omega_2)\,\wedge\,Tq_2\in \HH(\Div,\Omega_1)$ $\;\;\,\wedge\, u_2=\gamma_{\Gamma_{\rm int}}\rho^{-1}p_2$}
}\]
and
\[S_2\left[\begin{smallmatrix} p_2\\q_2\\ {\mathfrak u}\\u_2\end{smallmatrix}\right]=
\left[\begin{smallmatrix} \Div (Tq_2)-d\rho^{-1}p_2\\\grad (\rho^{-1}p_2)\\0\\ \gamma_{N,\Gamma_{\rm int}}(Tq_2)\end{smallmatrix}\right].\]

\begin{remark}
    To be precise, the output spaces are not equal to the corresponding input spaces but dual to them (as hinted in the definition of the trace spaces). Hence, we can identify them with each other via isomorphic dual mappings and still use our results.
\end{remark}

Dissipativity follows from the definition of the trace operators, whereas the closedness claims follow from closedness of the divergence and gradient operators together with boundedness of the involved trace operators. Further, it can be shown that the main operators $A_1$, $A_2$ of $S_1$ and $S_2$, resp., fulfill $\dom A_1=\dom A_1^*$, $\dom A_2=\dom A_2^*$ and
\[A_i^*\left[\begin{smallmatrix} p_i\\q_i\end{smallmatrix}\right]=
\left[\begin{smallmatrix} -\Div (Tq_i)-d\rho^{-1}p_i\\-\grad (\rho^{-1}p_i)\end{smallmatrix}\right],\quad i=1,2.\]
This shows that they are both maximally dissipative, whence they generate a~strongly continuous semigroup on $X_i$.
Hence, given initial data and existence of a solution we can apply the presented splitting algorithm with the proven convergence properties. An additional advantage of this technique in this special case is that the decomposition into rectangles allows the usage of, say, spectral type methods to solve the two wave equations over rectangles separately, see, e.g., \cite{CohenWave}, \cite{Kopriva}.

\begin{remark}
Of course, nothing prevents to consider more than two rectangles and couple the corresponding wave equations analogously to the above. Moreover, the presented coupling is only one possible choice. If we forget about the interpretation of $u_i$ and $y_i$ as inputs and outputs and see the system as behavioral, choosing $u_1= u_2$ and $y_1=y_2$ suggests itself.
\end{remark}
\backmatter

\bibliography{references}


\begin{thebibliography}{36}
\ifx \bisbn   \undefined \def \bisbn  #1{ISBN #1}\fi
\ifx \binits  \undefined \def \binits#1{#1}\fi
\ifx \bauthor  \undefined \def \bauthor#1{#1}\fi
\ifx \batitle  \undefined \def \batitle#1{#1}\fi
\ifx \bjtitle  \undefined \def \bjtitle#1{#1}\fi
\ifx \bvolume  \undefined \def \bvolume#1{\textbf{#1}}\fi
\ifx \byear  \undefined \def \byear#1{#1}\fi
\ifx \bissue  \undefined \def \bissue#1{#1}\fi
\ifx \bfpage  \undefined \def \bfpage#1{#1}\fi
\ifx \blpage  \undefined \def \blpage #1{#1}\fi
\ifx \burl  \undefined \def \burl#1{\textsf{#1}}\fi
\ifx \doiurl  \undefined \def \doiurl#1{\url{https://doi.org/#1}}\fi
\ifx \betal  \undefined \def \betal{\textit{et al.}}\fi
\ifx \binstitute  \undefined \def \binstitute#1{#1}\fi
\ifx \binstitutionaled  \undefined \def \binstitutionaled#1{#1}\fi
\ifx \bctitle  \undefined \def \bctitle#1{#1}\fi
\ifx \beditor  \undefined \def \beditor#1{#1}\fi
\ifx \bpublisher  \undefined \def \bpublisher#1{#1}\fi
\ifx \bbtitle  \undefined \def \bbtitle#1{#1}\fi
\ifx \bedition  \undefined \def \bedition#1{#1}\fi
\ifx \bseriesno  \undefined \def \bseriesno#1{#1}\fi
\ifx \blocation  \undefined \def \blocation#1{#1}\fi
\ifx \bsertitle  \undefined \def \bsertitle#1{#1}\fi
\ifx \bsnm \undefined \def \bsnm#1{#1}\fi
\ifx \bsuffix \undefined \def \bsuffix#1{#1}\fi
\ifx \bparticle \undefined \def \bparticle#1{#1}\fi
\ifx \barticle \undefined \def \barticle#1{#1}\fi
\bibcommenthead
\ifx \bconfdate \undefined \def \bconfdate #1{#1}\fi
\ifx \botherref \undefined \def \botherref #1{#1}\fi
\ifx \url \undefined \def \url#1{\textsf{#1}}\fi
\ifx \bchapter \undefined \def \bchapter#1{#1}\fi
\ifx \bbook \undefined \def \bbook#1{#1}\fi
\ifx \bcomment \undefined \def \bcomment#1{#1}\fi
\ifx \oauthor \undefined \def \oauthor#1{#1}\fi
\ifx \citeauthoryear \undefined \def \citeauthoryear#1{#1}\fi
\ifx \endbibitem  \undefined \def \endbibitem {}\fi
\ifx \bconflocation  \undefined \def \bconflocation#1{#1}\fi
\ifx \arxivurl  \undefined \def \arxivurl#1{\textsf{#1}}\fi
\csname PreBibitemsHook\endcsname

\bibitem[\protect\citeauthoryear{Adams and Fournier}{2003}]{adams2003sobolev}
\begin{bbook}
\bauthor{\bsnm{Adams}, \binits{R.A.}},
\bauthor{\bsnm{Fournier}, \binits{J.J.}}:
\bbtitle{Sobolev Spaces}.
\bsertitle{Pure and Applied Mathematics},
vol. \bseriesno{140}.
\bpublisher{Academic Press},
\blocation{Amsterdam, San Diego, Oxford, London}
(\byear{2003}).
\bcomment{2nd edition}
\end{bbook}
\endbibitem

\bibitem[\protect\citeauthoryear{Barbu}{2010}]{Bar10}
\begin{bbook}
\bauthor{\bsnm{Barbu}, \binits{V.}}:
\bbtitle{Nonlinear Differential Equations of Monotone Types in Banach Spaces}.
\bsertitle{Springer Monographs in Mathematics}.
\bpublisher{Springer},
\blocation{New York}
(\byear{2010}).
\doiurl{10.1007/978-1-4419-5542-5}
\end{bbook}
\endbibitem

\bibitem[\protect\citeauthoryear{Bartel et~al.}{2022}]{BaGuJaRe22}
\begin{botherref}
\oauthor{\bsnm{Bartel}, \binits{A.}},
\oauthor{\bsnm{G\"unther}, \binits{M.}},
\oauthor{\bsnm{Jacob}, \binits{B.}},
\oauthor{\bsnm{Reis}, \binits{T.}}:
Operator splitting based dynamic iteration for linear port-Hamiltonian systems
(2022).
\url{https://arxiv.org/abs/2208.03574v1}
\end{botherref}
\endbibitem

\bibitem[\protect\citeauthoryear{B{\'a}tkai et~al.}{2016}]{BCsF2}
\begin{barticle}
\bauthor{\bsnm{B{\'a}tkai}, \binits{A.}},
\bauthor{\bsnm{Csom{\'o}s}, \binits{P.}},
\bauthor{\bsnm{Farkas}, \binits{B.}}:
\batitle{Operator splitting for dissipative delay equations}.
\bjtitle{Semigroup Forum}
\bvolume{95}(\bissue{2}),
\bfpage{345}--\blpage{365}
(\byear{2016}).
\doiurl{10.1007/s00233-016-9812-y}
\end{barticle}
\endbibitem

\bibitem[\protect\citeauthoryear{Bellen and Zennaro}{2013}]{BelZeBook}
\begin{bbook}
\bauthor{\bsnm{Bellen}, \binits{A.}},
\bauthor{\bsnm{Zennaro}, \binits{M.}}:
\bbtitle{Numerical Methods for Delay Differential Equations}.
\bsertitle{Numerical Mathematics and Scientific Computation}.
\bpublisher{Oxford University Press},
\blocation{Oxford}
(\byear{2013})
\end{bbook}
\endbibitem

\bibitem[\protect\citeauthoryear{Bellen et~al.}{2009}]{BelZeActa}
\begin{barticle}
\bauthor{\bsnm{Bellen}, \binits{A.}},
\bauthor{\bsnm{Maset}, \binits{S.}},
\bauthor{\bsnm{Zennaro}, \binits{M.}},
\bauthor{\bsnm{Guglielmi}, \binits{N.}}:
\batitle{Recent trends in the numerical solution of retarded functional
  differential equations}.
\bjtitle{Acta Numer.}
\bvolume{18},
\bfpage{1}--\blpage{110}
(\byear{2009}).
\doiurl{10.1017/S0962492906390010}
\end{barticle}
\endbibitem

\bibitem[\protect\citeauthoryear{Cohen}{2002}]{CohenWave}
\begin{bbook}
\bauthor{\bsnm{Cohen}, \binits{G.C.}}:
\bbtitle{Higher-order Numerical Methods for Transient Wave Equations}.
\bsertitle{Scientific Computation},
p. \bfpage{348}.
\bpublisher{Springer}, \blocation{???}
(\byear{2002}).
\doiurl{10.1007/978-3-662-04823-8}.
\bcomment{With a foreword by R. Glowinski}.
\burl{https://doi.org/10.1007/978-3-662-04823-8}
\end{bbook}
\endbibitem

\bibitem[\protect\citeauthoryear{{Cs}om{\'o}s et~al.}{2021}]{CsEF}
\begin{barticle}
\bauthor{\bsnm{{Cs}om{\'o}s}, \binits{P.}},
\bauthor{\bsnm{Ehrhardt}, \binits{M.}},
\bauthor{\bsnm{Farkas}, \binits{B.}}:
\batitle{Operator splitting for abstract cauchy problems with dynamical
  boundary condition}.
\bjtitle{Operators and Matrices}
\bvolume{15}(\bissue{3}),
\bfpage{903}--\blpage{935}
(\byear{2021})
\end{barticle}
\endbibitem

\bibitem[\protect\citeauthoryear{Csomós et~al.}{2023}]{CsFK}
\begin{barticle}
\bauthor{\bsnm{Csomós}, \binits{P.}},
\bauthor{\bsnm{Farkas}, \binits{B.}},
\bauthor{\bsnm{Kovács}, \binits{B.}}:
\batitle{{Error estimates for a splitting integrator for abstract semilinear
  boundary coupled systems}}.
\bjtitle{IMA Journal of Numerical Analysis}
(\byear{2023}).
\doiurl{10.1093/imanum/drac079}
\end{barticle}
\endbibitem

\bibitem[\protect\citeauthoryear{Curtain and Zwart}{2020}]{CuZw20}
\begin{bbook}
\bauthor{\bsnm{Curtain}, \binits{R.}},
\bauthor{\bsnm{Zwart}, \binits{H.}}:
\bbtitle{Introduction to Infinite-dimensional Systems Theory: {A} State Space
  Approach}.
\bsertitle{Texts in Applied Mathematics},
vol. \bseriesno{71}.
\bpublisher{Springer},
\blocation{New York}
(\byear{2020}).
\doiurl{10.1007/978-1-0716-0590-5}
\end{bbook}
\endbibitem

\bibitem[\protect\citeauthoryear{Dell{\textquotesingle}Oro
  et~al.}{2023}]{DePaSe22}
\begin{barticle}
\bauthor{\bsnm{Dell{\textquotesingle}Oro}, \binits{F.}},
\bauthor{\bsnm{Paunonen}, \binits{L.}},
\bauthor{\bsnm{Seifert}, \binits{D.}}:
\batitle{Optimal decay for a wave-heat system with
  {C}oleman{\textendash}{G}urtin thermal law}.
\bjtitle{Journal of Mathematical Analysis and Applications}
\bvolume{518}(\bissue{2}),
\bfpage{126706}
(\byear{2023}).
\doiurl{10.1016/j.jmaa.2022.126706}
\end{barticle}
\endbibitem

\bibitem[\protect\citeauthoryear{Engel and Nagel}{2000}]{EnNa00}
\begin{bbook}
\bauthor{\bsnm{Engel}, \binits{K.-J.}},
\bauthor{\bsnm{Nagel}, \binits{R.}}:
\bbtitle{One-parameter Semigroups for Linear Evolution Equations}
vol. \bseriesno{194}.
\bpublisher{Springer},
\blocation{New York}
(\byear{2000})
\end{bbook}
\endbibitem

\bibitem[\protect\citeauthoryear{Faou et~al.}{2015}]{Faou15}
\begin{barticle}
\bauthor{\bsnm{Faou}, \binits{E.}},
\bauthor{\bsnm{Ostermann}, \binits{A.}},
\bauthor{\bsnm{Schratz}, \binits{K.}}:
\batitle{Analysis of exponential splitting methods for inhomogenous parabolic
  equations}.
\bjtitle{IMA J. Numer. Anal.}
\bvolume{35},
\bfpage{161}--\blpage{178}
(\byear{2015})
\end{barticle}
\endbibitem

\bibitem[\protect\citeauthoryear{Geiser}{2011}]{GeiserBook}
\begin{bbook}
\bauthor{\bsnm{Geiser}, \binits{J.}}:
\bbtitle{Iterative Splitting Methods for Differential Equations}.
\bsertitle{Chapman \& Hall/CRC Numerical Analysis and Scientific Computing}.
\bpublisher{CRC Press},
\blocation{Boca Raton, FL}
(\byear{2011}).
\doiurl{10.1201/b10947}
\end{bbook}
\endbibitem

\bibitem[\protect\citeauthoryear{Grisvard}{1985}]{Gris85}
\begin{bbook}
\bauthor{\bsnm{Grisvard}, \binits{P.}}:
\bbtitle{Elliptic Problems in Nonsmooth Domains}.
\bsertitle{Monographs and Studies in Mathematics},
vol. \bseriesno{24}.
\bpublisher{Pitman Advanced Publishing Program},
\blocation{Boston, Londonm Melbourne}
(\byear{1985})
\end{bbook}
\endbibitem

\bibitem[\protect\citeauthoryear{G\"unther et~al.}{2021}]{BaGuJaRe21}
\begin{barticle}
\bauthor{\bsnm{G\"unther}, \binits{M.}},
\bauthor{\bsnm{Bartel}, \binits{A.}},
\bauthor{\bsnm{Jacob}, \binits{B.}},
\bauthor{\bsnm{Reis}, \binits{T.}}:
\batitle{Dynamic iteration schemes and port-{H}amiltonian formulation in
  coupled {DAE} circuit simulation}.
\bjtitle{International Journal of Circuit Theory and Applications}
\bvolume{49},
\bfpage{430}--\blpage{452}
(\byear{2021}).
\doiurl{10.1002/cta.2870}
\end{barticle}
\endbibitem

\bibitem[\protect\citeauthoryear{Hairer et~al.}{2010}]{HairerGeom}
\begin{bbook}
\bauthor{\bsnm{Hairer}, \binits{E.}},
\bauthor{\bsnm{Lubich}, \binits{C.}},
\bauthor{\bsnm{Wanner}, \binits{G.}}:
\bbtitle{Geometric Numerical Integration: Structure-preserving Algorithms for
  Ordinary Differential Equations}.
\bsertitle{Springer Series in Computational Mathematics},
vol. \bseriesno{31}.
\bpublisher{Springer},
\blocation{Heidelberg}
(\byear{2010})
\end{bbook}
\endbibitem

\bibitem[\protect\citeauthoryear{Hansen and Ostermann}{2009}]{HanO09}
\begin{barticle}
\bauthor{\bsnm{Hansen}, \binits{E.}},
\bauthor{\bsnm{Ostermann}, \binits{A.}}:
\batitle{Exponential splitting for unbounded operators}.
\bjtitle{Math. Comp.}
\bvolume{78}(\bissue{267}),
\bfpage{1485}--\blpage{1496}
(\byear{2009})
\end{barticle}
\endbibitem

\bibitem[\protect\citeauthoryear{Hansen and Henningsson}{2017}]{Hansen1}
\begin{barticle}
\bauthor{\bsnm{Hansen}, \binits{E.}},
\bauthor{\bsnm{Henningsson}, \binits{E.}}:
\batitle{Additive domain decomposition operator splittings---convergence
  analyses in a dissipative framework}.
\bjtitle{IMA J. Numer. Anal.}
\bvolume{37}(\bissue{3}),
\bfpage{1496}--\blpage{1519}
(\byear{2017}).
\doiurl{10.1093/imanum/drw043}
\end{barticle}
\endbibitem

\bibitem[\protect\citeauthoryear{Hansen et~al.}{2016}]{Hansen3}
\begin{barticle}
\bauthor{\bsnm{Hansen}, \binits{E.}},
\bauthor{\bsnm{Ostermann}, \binits{A.}},
\bauthor{\bsnm{Schratz}, \binits{K.}}:
\batitle{The error structure of the {D}ouglas-{R}achford splitting method for
  stiff linear problems}.
\bjtitle{J. Comput. Appl. Math.}
\bvolume{303},
\bfpage{140}--\blpage{145}
(\byear{2016}).
\doiurl{10.1016/j.cam.2016.02.037}
\end{barticle}
\endbibitem

\bibitem[\protect\citeauthoryear{Hochbruck and
  Ostermann}{2010}]{HochbruckOstermann}
\begin{barticle}
\bauthor{\bsnm{Hochbruck}, \binits{M.}},
\bauthor{\bsnm{Ostermann}, \binits{A.}}:
\batitle{Exponential integrators}.
\bjtitle{Acta Numer.}
\bvolume{19},
\bfpage{209}--\blpage{286}
(\byear{2010}).
\doiurl{10.1017/S0962492910000048}
\end{barticle}
\endbibitem

\bibitem[\protect\citeauthoryear{Hundsdorfer and Verwer}{1989}]{Hansen2}
\begin{barticle}
\bauthor{\bsnm{Hundsdorfer}, \binits{W.H.}},
\bauthor{\bsnm{Verwer}, \binits{J.G.}}:
\batitle{Stability and convergence of the {P}eaceman-{R}achford {ADI} method
  for initial-boundary value problems}.
\bjtitle{Math. Comp.}
\bvolume{53}(\bissue{187}),
\bfpage{81}--\blpage{101}
(\byear{1989}).
\doiurl{10.2307/2008350}
\end{barticle}
\endbibitem

\bibitem[\protect\citeauthoryear{Hundsdorfer and Verwer}{2003}]{HunVerBook}
\begin{bbook}
\bauthor{\bsnm{Hundsdorfer}, \binits{W.}},
\bauthor{\bsnm{Verwer}, \binits{J.}}:
\bbtitle{Numerical Solution of Time-dependent Advection-diffusion-reaction
  Equations}.
\bsertitle{Springer Series in Computational Mathematics},
vol. \bseriesno{33}.
\bpublisher{Springer},
\blocation{Berlin}
(\byear{2003}).
\doiurl{10.1007/978-3-662-09017-6}
\end{bbook}
\endbibitem

\bibitem[\protect\citeauthoryear{Jahnke and Lubich}{2000}]{JahnkeLubich}
\begin{barticle}
\bauthor{\bsnm{Jahnke}, \binits{T.}},
\bauthor{\bsnm{Lubich}, \binits{C.}}:
\batitle{Error bounds for exponential operator splittings}.
\bjtitle{BIT}
\bvolume{40}(\bissue{4}),
\bfpage{735}--\blpage{744}
(\byear{2000}).
\doiurl{10.1023/A:1022396519656}
\end{barticle}
\endbibitem

\bibitem[\protect\citeauthoryear{Kopriva}{2009}]{Kopriva}
\begin{bbook}
\bauthor{\bsnm{Kopriva}, \binits{D.A.}}:
\bbtitle{Implementing Spectral Methods for Partial Differential Equations}.
\bsertitle{Scientific Computation},
p. \bfpage{394}.
\bpublisher{Springer}, \blocation{???}
(\byear{2009}).
\doiurl{10.1007/978-90-481-2261-5}.
\bcomment{Algorithms for scientists and engineers}.
\burl{https://doi.org/10.1007/978-90-481-2261-5}
\end{bbook}
\endbibitem

\bibitem[\protect\citeauthoryear{Kurula and Zwart}{2015}]{KuZw15}
\begin{barticle}
\bauthor{\bsnm{Kurula}, \binits{M.}},
\bauthor{\bsnm{Zwart}, \binits{H.}}:
\batitle{Linear wave systems on n-d spatial domains}.
\bjtitle{International Journal of Control}
\bvolume{88}(\bissue{5}),
\bfpage{1063}--\blpage{1077}
(\byear{2015}).
\doiurl{10.1080/00207179.2014.993337}
\end{barticle}
\endbibitem

\bibitem[\protect\citeauthoryear{Lions and Mercier}{1979}]{Lions1979}
\begin{barticle}
\bauthor{\bsnm{Lions}, \binits{P.L.}},
\bauthor{\bsnm{Mercier}, \binits{B.}}:
\batitle{Splitting algorithms for the sum of two nonlinear operators}.
\bjtitle{SIAM Journal on Numerical Analysis}
\bvolume{16}(\bissue{6}),
\bfpage{964}--\blpage{979}
(\byear{1979}).
Accessed 2022-05-12
\end{barticle}
\endbibitem

\bibitem[\protect\citeauthoryear{Marchuk}{1990}]{Marchuk}
\begin{bchapter}
\bauthor{\bsnm{Marchuk}, \binits{G.I.}}:
\bctitle{Splitting and alternating direction methods}.
In: \bbtitle{Handbook of Numerical Analysis, {V}ol. {I}}.
\bsertitle{Handb. Numer. Anal., I},
pp. \bfpage{197}--\blpage{462}.
\bpublisher{Elsevier},
\blocation{Holland}
(\byear{1990})
\end{bchapter}
\endbibitem

\bibitem[\protect\citeauthoryear{McLachlan and Quispel}{2002}]{Quispel}
\begin{barticle}
\bauthor{\bsnm{McLachlan}, \binits{R.I.}},
\bauthor{\bsnm{Quispel}, \binits{G.R.W.}}:
\batitle{Splitting methods}.
\bjtitle{Acta Numer.}
\bvolume{11},
\bfpage{341}--\blpage{434}
(\byear{2002}).
\doiurl{10.1017/S0962492902000053}
\end{barticle}
\endbibitem

\bibitem[\protect\citeauthoryear{Peaceman and
  Rachford}{1955}]{PeacemanRachford}
\begin{barticle}
\bauthor{\bsnm{Peaceman}, \binits{D.W.}},
\bauthor{\bsnm{Rachford}, \binits{H.H.} \bsuffix{Jr.}}:
\batitle{The numerical solution of parabolic and elliptic differential
  equations}.
\bjtitle{J. Soc. Indust. Appl. Math.}
\bvolume{3},
\bfpage{28}--\blpage{41}
(\byear{1955})
\end{barticle}
\endbibitem

\bibitem[\protect\citeauthoryear{Staffans}{2002}]{St02a}
\begin{barticle}
\bauthor{\bsnm{Staffans}, \binits{O..}}:
\batitle{Passive and conservative continuous-time impedance and scattering
  systems. {I}.\ well-posed systems}.
\bjtitle{Math.\ Control Signals Systems}
\bvolume{15}(\bissue{4}),
\bfpage{291}--\blpage{315}
(\byear{2002})
\end{barticle}
\endbibitem

\bibitem[\protect\citeauthoryear{Staffans}{2005}]{St05}
\begin{bbook}
\bauthor{\bsnm{Staffans}, \binits{O.}}:
\bbtitle{Well-posed Linear Systems}.
\bsertitle{Encyclopedia of Mathematics and Its Applications},
vol. \bseriesno{103}.
\bpublisher{Cambridge University Press},
\blocation{Cambridge}
(\byear{2005})
\end{bbook}
\endbibitem

\bibitem[\protect\citeauthoryear{Strang}{1968}]{Strang68}
\begin{barticle}
\bauthor{\bsnm{Strang}, \binits{G.}}:
\batitle{On the construction and comparison of difference schemes}.
\bjtitle{SIAM J. Numer. Anal.}
\bvolume{5}(\bissue{3}),
\bfpage{506}--\blpage{517}
(\byear{1968}).
\doiurl{10.1137/0705041}
\end{barticle}
\endbibitem

\bibitem[\protect\citeauthoryear{Tartar}{2007}]{Tart07}
\begin{bbook}
\bauthor{\bsnm{Tartar}, \binits{L.}}:
\bbtitle{An Introduction to Sobolev Spaces and Interpolation Spaces}.
\bsertitle{Lecture Notes of the Unione Matematica Italiana}.
\bpublisher{Springer},
\blocation{Berlin, Heidelberg}
(\byear{2007})
\end{bbook}
\endbibitem

\bibitem[\protect\citeauthoryear{Tucsnak and Weiss}{2009}]{TuWe09}
\begin{bbook}
\bauthor{\bsnm{Tucsnak}, \binits{M.}},
\bauthor{\bsnm{Weiss}, \binits{G.}}:
\bbtitle{Observation and Control for Operator Semigroups}.
\bsertitle{Birkh\"auser Advanced Texts Basler Lehrb\"ucher}.
\bpublisher{Birkh\"auser},
\blocation{Basel}
(\byear{2009})
\end{bbook}
\endbibitem

\end{thebibliography}
\nocite{label}

\end{document}